\pgfplotsset{compat=1.17}
\newtheorem{thm}{Theorem}[section]
\newtheorem{rem}[thm]{Remark}
\newtheorem{lem}[thm]{Lemma}
\newtheorem{cor}[thm]{Corollary}
\algrenewcommand\algorithmicrequire{\textbf{Input:}}
\algrenewcommand\algorithmicensure{\textbf{Output:}}
\numberwithin{equation}{section}
\titlespacing\section{0pt}{12pt plus 3pt minus 3pt}{1pt plus 1pt minus 1pt}
\titlespacing\subsection{0pt}{10pt plus 3pt minus 3pt}{1pt plus 1pt minus 1pt}
\titlespacing\subsubsection{0pt}{8pt plus 3pt minus 3pt}{1pt plus 1pt minus 1pt}
\let\le\leqslant%
\let\ge\geqslant%
\definecolor{lime}{HTML}{A6CE39}
\DeclareRobustCommand{\orcidicon}{
	\begin{tikzpicture}
	\draw[lime, fill=lime] (0,0)
	circle [radius=0.16]
	node[white] {{\fontfamily{qag}\selectfont \tiny ID}};
	\draw[white, fill=white] (-0.0625,0.095)
	circle [radius=0.007];
	\end{tikzpicture}
	\hspace{-2mm}
}
\xdef\csname orcid\x\endcsname{\noexpand\href{https://orcid.org/\csname orcidauthor\x\endcsname}
    {\noexpand\orcidicon}}
\title{IRKA is a Riemannian Gradient Descent Method}
\author[1]{Petar~Mlinari\'c\orcidA{}}
\author[2]{Christopher~A.~Beattie\orcidB{}}
\author[3]{Zlatko~Drma\v{c}\orcidC{}}
\author[4]{Serkan~Gugercin\orcidD{}}
\affil[1]{
  Department of Mathematics,
  Virginia Tech,
  Blacksburg,
  VA 24061
  (\texttt{mlinaric@vt.edu})
}
\affil[2]{
  Department of Mathematics,
  Virginia Tech,
  Blacksburg,
  VA 24061
  (\texttt{beattie@vt.edu})
}
\affil[3]{
  Faculty of Natural Sciences,
  University of Zagreb,
  10000 Zagreb,
  Croatia
  (\texttt{drmac@math.hr})
}
\affil[4]{
  Department of Mathematics and
  Division of Computational Modeling and Data Analytics,
  Academy of Data Science,
  Virginia Tech,
  Blacksburg,
  VA 24061
  (\texttt{gugercin@vt.edu})
}
\begin{document}

\twocolumn[ 
  \begin{@twocolumnfalse} 

\maketitle

\begin{abstract}
  The iterative rational Krylov algorithm (IRKA) is a commonly used fixed point
iteration developed to minimize the $\Htwo$ model order reduction error.
In this work, IRKA is recast as a Riemannian gradient descent method with a
fixed step size over the manifold of rational functions having fixed degree.
This interpretation motivates the development of a Riemannian gradient descent
method utilizing as a natural extension variable step size and line search.
Comparisons made between IRKA and this extension on a few examples demonstrate
significant benefits.

\end{abstract}

\keywords{%
  linear systems,
reduced order modeling,
optimization,
computational methods

}

\vspace{0.35cm}

  \end{@twocolumnfalse} 
] 

\section{Introduction}

The \ac{irka} is a commonly used method for $\Htwo$-optimal \ac{mor} of
linear time-invariant systems~\cite{GugAB06,GugAB08,AntBG10}.
As usually conceived,
it is presented as a fixed point iteration based on
interpolatory $\Htwo$-optimality conditions;
we describe it in \Cref{sec:irka}.

Notably, there have also been approaches for $\Htwo$ \ac{mor} via
Riemannian optimization over various matrix manifolds
(see, e.g.,~\cite{YanL99,XuZ13,XuJY15,SatS15,BenCS19}).
In this work, we focus on the interpolatory \ac{irka} framework and
recast \ac{irka} as a Riemannian optimization method over
the manifold of stable rational functions of fixed McMillan degree
$\ManRatStab$, embedded in the Hardy space $\Hardy$.
This facilitates development of a geometric framework for \ac{irka} and
the interpretation of it as a Riemannian gradient descent method
with fixed step size.
We build on this observation, introduce a variable step size extension, and
explore the numerical performance.

These key results depend on the observation that the set of stable rational
functions of fixed McMillan degree $\ManRatStab$ is an embedded submanifold in
the Hardy space $\Hardy$.
We prove this while also drawing out issues related to connectedness of the
manifold and the Cauchy index.
An accessible reference for these facts in the continuous-time case
was not known to us.
This framework allows a new geometric interpretation of \ac{irka} as a
Riemannian optimization method with fixed step size.
This in turn leads to the introduction of an enhanced approach utilizing a
backtracking line search that preserves stability, and guarantees a decrease in
the $\Htwo$ error at each step.
Significantly, the utilization of different step sizes can be interpreted as a
fixed point iteration with an equivalent set of interpolatory conditions.

We begin with a background on $\Htwo$-optimal \ac{mor} and
Riemann optimization in \Cref{sec:irka,sec:riemann-opt}, respectively.
We then discuss known results on the manifold structure of $\ManRatStab$ and
introduce the new result for continuous-time systems in
\Cref{sec:rat-fun-manifold}.
This allows us to interpret the $\Htwo$-optimal \ac{mor} problem as a
Riemannian optimization problem and
derive the Riemannian gradient of the $\Htwo$ error in \Cref{sec:h2-riemann}.
Then we give a geometric interpretation of necessary optimality conditions and
\ac{irka} in \Cref{sec:geom},
leading to the interpretation of \ac{irka} as a Riemannian gradient descent
method with a fixed step size in \Cref{sec:irka-riemann}.
In \Cref{sec:algorithm}, we introduce a variant of \ac{irka},
called \ac{irka2},
using a variable step size that enforces stability preservation and
decreases the $\Htwo$ error at every step.
We compare the two methods in \Cref{sec:numerics} and
conclude with \Cref{sec:conclusion}.

\section{\texorpdfstring{$\Htwo$}{H2}-optimal Model Order Reduction}%
\label{sec:irka}

We briefly describe \ac{irka} and summarize relevant background and context
for later discussion.

\subsection{Model Order Reduction Problem}

Given \iac{fom} of order $n$
\begin{align*}
  E \dot{x}(t) & = A x(t) + B u(t),\ x(0) = 0, \\*
  y(t)         & = C x(t),
\end{align*}
the goal is to find \iac{rom} of order $r$
\begin{align*}
  \hE \dot{\hx}(t) & = \hA \hx(t) + \hB u(t),\ \hx(0) = 0, \\*
  \hy(t)           & = \hC \hx(t),
\end{align*}
such that $r \ll n$ and
the outputs $y(t), \hy(t) \in \Cp$ are close to one another
when the systems are presented with the same input $u(t) \in \Cm$.
Here, $x(t) \in \Cn$ is the full-order state and
$\hx(t) \in \Cr$ is the reduced-order state;
$E, A \in \Cnn$,
$B \in \Cnm$,
$C \in \Cpn$,
$\hE, \hA \in \Crr$,
$\hB \in \Crm$, and
$\hC \in \Cpr$.
We assume
$E$ and $\hE$ are invertible and that
$E^{-1} A$ and $\hE^{-1} \hA$ are Hurwitz
(their eigenvalues are in the open left half-plane).
We develop and present theory for general complex systems.
The case of real systems follow directly from our discussion and
we add remarks when necessary to highlight the real case.

To measure the distance between $y$ and $\hy$,
we use the transfer functions $H$ and $\hH$ of the \ac{fom} and \ac{rom},
respectively:
\begin{equation*}
  H(s) = C {(s E - A)}^{-1} B
  \quad \textnormal{and} \quad
  \hH(s) = \hC \myparen*{s \hE - \hA}^{-1} \hB.
\end{equation*}
The $\Htwo$ error is defined as
\begin{equation*}
  \normHtwo*{H - \hH}
  = \myparen*{
    \frac{1}{2 \pi}
    \int_{-\infty}^{\infty}
    \normF*{H(\imag \omega) - \hH(\imag \omega)}^2
    \dif{\omega}
  }^{1/2}.
\end{equation*}
A uniform bound on the output error can be formulated:
\begin{equation*}
  \normLinf*{y - \hy}
  \le
  \normHtwo*{H - \hH}
  \normLtwo{u}.
\end{equation*}
This motivates reducing the $\Htwo$ error, thus assuring the output error
$\normLinf{y - \hy}$ to be small for bounded input energy.
This leads one to the $\Htwo$-optimal \ac{mor} problem
\begin{equation}\label{eq:h2-opt-prob}
  \minimize_{
    \substack{
      \hH \textnormal{ of order } r \\
      \hH \textnormal{ stable}
    }
  }
  \quad
  \normHtwo*{H - \hH},
\end{equation}
where stability of $\hH$ means that its poles are in the open left half-plane.
Our focus here is on interpolatory and $\Htwo$-optimal \ac{mor}.
For further approaches, see, e.g.,~\cite{ZhoDG96,Ant05,BenOCW17,AntBG20,%
  BenGQ+21a,BenGQ+21b}.

\subsection{Hardy Space}

Under the assumptions made for the \ac{fom} and \ac{rom},
both $H$ and $\hH$ are elements of the Hardy space
$\Hardy = \Htwo^{p \times m}(\bbC^+)$ (see, e.g.,~\cite[Section~5.1.3]{Ant05}),
where $\bbC^+$ denotes the open right half-plane.
$\Hardy$ is a Hilbert space of analytic functions
$\fundef{F}{\bbC^+}{\Cpm}$ satisfying
\(
\sup_{\xi > 0}
\int_{-\infty}^{\infty}
\normF{F(\xi + \imag \omega)}^2
\dif{\omega}
< \infty
\)
with the inner product
\[
  \ipHtwo{F}{G}
  =
  \frac{1}{2 \pi}
  \int_{-\infty}^{\infty}
  \trace{F(\imag \omega)\herm G(\imag \omega)}
  \dif{\omega},
\]
using extensions of $F$ and $G$ from $\bbC^+$ to $\overline{\bbC^+}$.
We have particular interest in the real Hilbert space of
real $p \times m$ transfer functions,
$\ReHardy \subset \Hardy$,
\begin{equation*}
  \ReHardy
  = \mybrace*{
    H \in \Hardy :
    \overline{H(s)} = H(\overline{s}),
    \forall s \in \bbC^+
  }.
\end{equation*}

\subsection{Interpolatory Necessary Conditions for Optimality}

The following theorem gives necessary $\Htwo$-optimality conditions when the
\ac{rom} has simple poles (see, e.g.,~\cite{MeiL67,AntBG10}).
\begin{thm}
  Let $H, \hH \in \ReHardy$ be such that $\hH$ has the pole-residue form
  \[\hH(s) = \sum_{i = 1}^r \frac{c_i b_i\herm}{s - \lambda_i},\]
  where $\lambda_i$ are pairwise distinct.
  Let $\hH$ be an $\Htwo$-optimal approximation for $H$ as
  in~\eqref{eq:h2-opt-prob}.
  Then for $i = 1, 2, \ldots, r$
  \begin{subequations}\label{eq:h2-opt-cond-interp}
    \begin{align}
      H\myparen*{-\overline{\lambda_i}} b_i
       & =
      \hH\myparen*{-\overline{\lambda_i}} b_i,      \\
      c_i\herm H\myparen*{-\overline{\lambda_i}}
       & =
      c_i\herm \hH\myparen*{-\overline{\lambda_i}}, \\
      c_i\herm H'\myparen*{-\overline{\lambda_i}} b_i
       & =
      c_i\herm \hH'\myparen*{-\overline{\lambda_i}} b_i.
    \end{align}
  \end{subequations}
\end{thm}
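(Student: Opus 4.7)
The plan is to derive conditions (a)--(c) as first-order optimality conditions for the squared $\Htwo$ error, regarded as an explicit smooth function of the reduced-order parameters $\{\lambda_i, b_i, c_i\}_{i=1}^r$. The central analytic tool will be a pole-residue identity that evaluates $\Htwo$ inner products against simple rational functions as point evaluations of the other factor at the mirror-image pole in $\bbC^+$.

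First, I would establish the pole-residue identity: for every $G \in \Htwo$, every $\lambda$ with $\Real{\lambda} < 0$, and vectors $b \in \Cm$, $c \in \Cp$,
\[
  \ipHtwo*{\frac{c b\herm}{\,\cdot\, - \lambda}}{G} = c\herm G(-\myconj{\lambda})\, b.
\]
This follows by writing the defining integral over the imaginary axis as a contour integral closed into $\bbC^+$ and applying the residue theorem: the adjoint factor contributes a single simple pole at $s = -\myconj{\lambda} \in \bbC^+$, while $G$ is analytic on $\overline{\bbC^+}$. Applying this identity bilinearly to $\hH(s) = \sum_i c_i b_i\herm/(s-\lambda_i)$ yields the closed-form expression
\[
  \normHtwo*{H - \hH}^2 = \normHtwo{H}^2 - 2\Real*{\sum_{i=1}^r c_i\herm H(-\myconj{\lambda_i}) b_i} + \sum_{i,k=1}^r \frac{c_i\herm c_k\, b_k\herm b_i}{-\myconj{\lambda_i} - \lambda_k},
\]
which is smooth in a neighborhood of the minimizer, since the simple, stable-pole assumption persists under small perturbations and keeps us in the interior of an open parameter set.

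Next, I would set the first variation to zero along three types of one-parameter perturbations. A perturbation $b_i \to b_i + \varepsilon\, \delta b$ produces $\delta\hH = c_i(\delta b)\herm/(s-\lambda_i)$; computing $\ipHtwo{\delta\hH}{H - \hH}$ via the residue identity and requiring its real part to vanish for every $\delta b \in \Cm$ gives condition (b). An entirely analogous argument with $c_i \to c_i + \varepsilon\, \delta c$ yields (a). For (c), the perturbation $\lambda_i \to \lambda_i + \varepsilon\, \delta\lambda$ induces the double-pole variation $\delta\hH = c_i b_i\herm\, \delta\lambda /(s-\lambda_i)^2$; the required companion identity
\[
  \ipHtwo*{\frac{c b\herm}{(\,\cdot\, - \lambda)^2}}{G} = c\herm G'(-\myconj{\lambda})\, b
\]
is obtained by differentiating the simple-pole identity with respect to $\lambda$ under the integral sign, and produces condition (c).

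The main technical obstacle is consistent bookkeeping through complex differentiation, since $\lambda_i$ enters the error anti-holomorphically via $-\myconj{\lambda_i}$. A clean path is Wirtinger calculus: treat each complex parameter and its conjugate as independent variables, so that stationarity in one of them already captures the full complex optimality condition; equivalently, one splits each complex perturbation into its real and imaginary parts and recombines the resulting pair of real equations. A secondary but straightforward point is to verify that at a minimizer the pole-residue parametrization is a local chart, so that unconstrained first-order conditions in $(\lambda_i, b_i, c_i)$ are legitimate; this is where simplicity of the poles is used in an essential way.
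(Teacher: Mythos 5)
Your argument is sound, but it follows a genuinely different route from the paper. The paper states this theorem without proof (citing the classical references) and then, in \Cref{sec:h2-riemann,sec:geom}, re-derives the same conditions geometrically: it establishes that $\ManRatStab$ is an embedded submanifold of $\Hardy$, computes the Riemannian gradient of $f(\hH)=\normHtwo*{H-\hH}^2$ as the tangent-space projection of $\hH-H$, identifies the spanning set $e_j b_i\herm/(s-\lambda_i)$, $c_i e_\ell\herm/(s-\lambda_i)$, $c_i b_i\herm/(s-\lambda_i)^2$ of $\T_{\hH}\ManRatStab$ (\Cref{lem:tan-space-span}), and shows that $H-\hH\perp\T_{\hH}\ManRatStab$ is equivalent to \eqref{eq:h2-opt-cond-interp} (\Cref{thm:orth-interp,cor:orth-interp-cond}). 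Your proof is the classical parametric one: an explicit finite-dimensional formula for the error in the pole-residue coordinates plus Wirtinger first-order conditions. The two are close cousins --- your admissible perturbations $\delta\hH$ are exactly the paper's tangent-space spanning vectors, and both hinge on the same Cauchy-integral identities --- but yours needs no manifold machinery, while the paper's buys the geometric picture (gradient, orthogonality, retraction) that it exploits later to recast IRKA as Riemannian gradient descent. Two small repairs: the double-pole identity carries a minus sign, $\ipHtwo*{c b\herm/(s-\lambda)^2}{G} = -c\herm G'\myparen*{-\myconj{\lambda}}b$ (differentiate the simple-pole identity with respect to $\myconj{\lambda}$, not $\lambda$; harmless here since the stationarity condition is set to zero), and since the theorem is posed in $\ReHardy$ you should either perturb conjugate pole pairs jointly so the competitor stays real, or argue over $\Hardy$ and recover the real case by conjugation symmetry, as the paper remarks.
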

That is, the $\Htwo$-optimal \acp{rom} are bitangential Hermite interpolants
at reflected poles of the \ac{rom}
in tangent directions given by the (vector) residues of the \ac{rom}.
This theorem directly extends from $\ReHardy$ to $\Hardy$ as well.
Extension to higher-order poles is possible~\cite{VanGA10},
but we focus on simple poles in the theoretical analysis for simplicity.

\subsection{Bitangential Hermite Interpolation}%
\label{sec:bhi}

Given interpolation points $\sigma_i \in \bbC$ and
tangential directions $b_i \in \Cm$, $c_i \in \Cp$,
for $i = 1, 2, \ldots, r$,
bitangential Hermite interpolants (as required for $\Htwo$-optimality)
can be constructed via Petrov-Galerkin projection based on rational Krylov
subspaces~\cite{GugAB06}:
For $H(s) = C {(s E - A)}^{-1} B$,
define projection matrices, $V, W \in \Cnr$, such that
\begin{subequations}\label{eq:spanVW}
  \begin{align}
    \label{eq:spanV}
    \image{V}
     & = \myspan*{\myparen*{\sigma_i E - A}^{-1} B b_i}_{i = 1}^r,      \\
    \image{W}
     & = \myspan*{\myparen*{\sigma_i E - A}\mherm C\herm c_i}_{i = 1}^r
  \end{align}
\end{subequations}
($\image{M}$ denotes the \emph{image} or \emph{range} of $M$).
Then \iac{rom} is constructed as
\begin{equation}\label{eq:pg}
  \hE = W\herm E V,\
  \hA = W\herm A V,\
  \hB = W\herm B,\
  \hC = C V.
\end{equation}
For $\hH(s) = \hC {(s \hE - \hA)}^{-1} \hB$,
assuming invertibility where needed, we have
for $i = 1, 2, \ldots, r$~\cite{AntBG10}:
\begin{align*}
  H(\sigma_i) b_i           & = \hH(\sigma_i) b_i,           \\
  c_i\herm H(\sigma_i)      & = c_i\herm \hH(\sigma_i),      \\
  c_i\herm H'(\sigma_i) b_i & = c_i\herm \hH'(\sigma_i) b_i.
\end{align*}
Note that $V$ and $W$ can be computed from solutions to Sylvester equations
as we discuss next.
Given interpolation data $\{\sigma_i, b_i, c_i\}$,
let the $i$th column of $V$ be ${(\sigma_i E - A)}^{-1} B b_i$.
This construction satisfies~\eqref{eq:spanV}.
Then it directly follows that $V$ satisfies the Sylvester equation
\begin{equation*}
  A V - E V \Sigma + B \tR\herm = 0,
\end{equation*}
where $\Sigma = \mydiag{\sigma_i}$ and the $i$th row of $\tR$ is $b_i\herm$.
Then, for any invertible $T, \tE \in \Crr$, we have
\begin{equation*}
  \myparen*{
    A V
    - E V \Sigma
    + B \tR\herm
  }
  T \tE\herm
  = 0.
\end{equation*}
Therefore,
\begin{equation*}
  A (V T) \tE\herm
  + E (V T) \myparen*{-T^{-1} \Sigma T \tE\herm}
  + B \myparen*{\tR\herm T \tE\herm} = 0.
\end{equation*}
With
$\tV = V T$,
$\tA = -\tE T\herm \Sigma\herm T\mherm$, and
$\tB = \tE T\herm \tR$,
we find
\begin{equation}\label{eq:V-sylv}
  A \tV \tE\herm
  + E \tV \tA\herm
  + B \tB\herm = 0.
\end{equation}
Note that $\image{\tV} = \image{V}$ and
$\tE^{-1} \tA$ has $-\overline{\sigma_i}$ as eigenvalues.
Analogous constructions hold for $W$, thus, as claimed,
both $V$ and $W$ can be constructed from solving Sylvester equations of the
form~\eqref{eq:V-sylv}.
This fact will be employed in the algorithmic development
in~\Cref{sec:algorithm}.
Note that solving~\eqref{eq:V-sylv} does not require that $\tE^{-1} \tA$ has
simple eigenvalues~\cite{BenKS11}.

Assuming that the interpolation data is closed under conjugation,
$V$ and $W$ can be chosen to be real so that the \ac{rom} is also real.
See~\cite{AntBG20} for further details.

\subsection{The IRKA Iteration}

Construction of $V$ and $W$ above assumed the knowledge of interpolation data.
However, as shown in~\eqref{eq:h2-opt-cond-interp},
the interpolation data for $\Htwo$ optimality depend on the optimal \ac{rom} to
be constructed.
Thus, satisfying~\eqref{eq:h2-opt-cond-interp} requires an iterative process.
This observation, together with the interpolatory projection framework,
led to the development of the \emph{\acf{irka}}~\cite{GugAB06,GugAB08,AntBG10},
a fixed point iteration algorithm that, given an iterate
$\hH_k(s) = \sum_{i = 1}^r \frac{\cik \bikstar}{s - \lamik}$,
determines the next iterate $\hH_{k + 1}$ from the interpolation conditions,
for $i = 1, 2, \ldots, r$:
\begin{subequations}\label{eq:irka}
  \begin{align}
    H\myparen*{-\lamikstar} \bik
     & =
    \hH_{k + 1}\myparen*{-\lamikstar} \bik,     \\
    \cikstar H\myparen*{-\lamikstar}
     & =
    \cikstar \hH_{k + 1}\myparen*{-\lamikstar}, \\
    \cikstar H'\myparen*{-\lamikstar} \bik
     & =
    \cikstar \hH_{k + 1}'\myparen*{-\lamikstar} \bik.
  \end{align}
\end{subequations}
Using Petrov-Galerkin projection as described in~\ref{sec:bhi},
these conditions uniquely determine a rational transfer function of order $r$.
Then, this fixed point iteration, i.e., \ac{irka},
is run until convergence upon which the \ac{rom} satisfies the interpolatory
conditions~\eqref{eq:h2-opt-cond-interp}.
For details of \ac{irka}, we refer the reader to~\cite{GugAB08,AntBG20}.
Despite its success in practice and its usage across numerous applications,
since \ac{irka} is a fixed point iteration,
the convergence and stability of the \ac{rom} are not theoretically guaranteed.
In this work, by recasting \ac{irka} in a Riemannian optimization setting,
we develop efficient variants that aim to resolve these issues.

\section{Riemannian Optimization Background}%
\label{sec:riemann-opt}

We summarize here a few topics from Riemannian optimization
(see also~\cite{AbsMS08,Bou23}),
focusing, in particular, on Riemannian submanifolds
(see, e.g.,~\cite{Lan95,Kli11,Lee12}),
that will be used heavily in the rest of the paper.

\subsection{General Manifolds}

A \emph{(smooth) manifold} $\cM = (M, \cA)$,
modeled on a Banach space $\bbE$,
is a set $M$ together with an \emph{atlas} $\cA$,
which is a collection of \emph{charts} $(U_i, \varphi_i)$
($i$ is an element of an index set)
such that
\begin{enumerate}
  \item each $U_i$ is a subset of $M$ and
        the $U_i$ cover $M$,
  \item each $\varphi_i$ is a bijection from $U_i$
        to an open subset $\varphi(U_i)$ of $\bbE$ and
        $\varphi_i(U_i \cap U_j)$ is open in $\bbE$ for any $i, j$,
  \item the map
        $\fundef{
            \varphi_j \circ \varphi_i^{-1}
          }{
            \varphi_i(U_i \cap U_j)
          }{
            \varphi_j(U_i \cap U_j)
          }$
        is smooth (infinitely differentiable) for each pair of indices $i, j$.
\end{enumerate}
We additionally assume that the atlas topology
defined by the maximal atlas $\cA^+$
(the collection of all charts compatible with elements of $\cA$)
is Hausdorff and second-countable.
As is common in the literature, we conflate
$\cM$ with $M$ and
$(U_i, \varphi_i)$ with $\varphi_i$
in the following.

The \emph{dimension} of a manifold, denoted by $\dim(\cM)$,
is defined to be $\dim(\bbE)$.
We are interested in both finite and infinite-dimensional manifolds.
In particular, note that the Hardy space $\Hardy$ is a manifold with
$\bbE = \Hardy$ and
the identity map $\fundef{\operatorname{id}}{\Hardy}{\Hardy}$ as the chart.
Additionally, we are interested in embedded submanifolds
(which we discuss in~\Cref{{sec:embedsub}}).
But, first, we need a general concept of a smooth function and its differential.

\subsection{General Tangent Space and Smooth Functions}

A function $\fundef{F}{\cM}{\cN}$ from a manifold $\cM$ to a manifold $\cN$ is
\emph{smooth at $p$} if
$\fundef{\psi \circ F \circ \varphi^{-1}}{\varphi(U)}{\psi(V)}$
is smooth for a chart $(U, \varphi)$ of $\cM$ and a chart $(V, \psi)$ of $\cN$
such that $p \in U$ and $F(p) \in V$.
A function is \emph{smooth} if it is smooth at every point.

For a manifold $\cM$ and point $p \in \cM$,
let $\cC$ be the set of smooth curves $\fundef{c}{I}{\cM}$
such that $c(0) = p$
where $I \subseteq \bbR$ is an interval around zero.
Define an equivalence relation $\sim$ between two curves $c_1, c_2$ by
\begin{equation*}
  c_1 \sim c_2
  \quad \Leftrightarrow \quad
  (\varphi \circ c_1)'(0)
  = (\varphi \circ c_2)'(0)
\end{equation*}
for some chart $\varphi$ of $\cM$ around $p$.
The \emph{tangent vectors} of $\cM$ at $p$ are then equivalence classes $[c]$
under the equivalence relation $\sim$.
The \emph{tangent space} of $\cM$ at $p$,
denoted $\T_p\cM$,
is the set of all tangent vectors at $p$, i.e.,
$\T_p\cM = \cC / {\sim}$.
A tangent space is a vector space of the same dimension as the manifold.

The \emph{differential} of a smooth function $\fundef{F}{\cM}{\cN}$
at $p \in \cM$ is the linear operator
$\fundef{DF(p)}{\T_p\cM}{\T_{F(p)}\cN}$,
defined by $DF(p)[v] = [F \circ c]$,
where $c$ is a representative curve, i.e., $v = [c] \in \T_p\cM$.
Note that the brackets in $[v]$ are only used to denote
that $DF(p)$ is applied to $v$.

\subsection{Embedded Submanifolds}%
\label{sec:embedsub}

A \emph{smooth immersion} is a smooth mapping such that
its differential is injective at each point and
the image of the differential is a closed subspace.
A \emph{smooth embedding} is a smooth immersion that
is also a homeomorphism onto its image, i.e.,
its inverse is continuous.

An \emph{embedded submanifold} of $\overline{\cM}$ is a subset
$\cM \subseteq \overline{\cM}$ that is a manifold in the subspace topology,
endowed with a smooth structure with respect to which the inclusion map
$\cM \hookrightarrow \overline{\cM}$ is a smooth embedding.
$\overline{\cM}$ is called an \emph{ambient space}.

A function $\fundef{F}{\cM}{\cN}$ between embedded submanifolds is smooth
if and only if $F$ can be extended to a smooth function over
an open neighborhood of $\cM$ in its ambient space.

\subsection{Tangent Vectors, Tangent Spaces, Tangent Bundle, and Normal Spaces
  of Embedded Submanifolds}

Let $\cM$ be an embedded submanifold of a Banach space $\bbE$.
The tangent space of $\cM$ at $p \in \cM$
can be identified with
the set of velocities at $p$ of all smooth curves in $\cM$ going through $p$,
i.e., $\T_p\cM \simeq \{c'(0) \mid \fundef{c}{I}{\cM},\ c(0) = p\}$,
where $I \subseteq \bbR$ is any interval containing zero.
In the following, we always interpret the tangent space of
an embedded submanifold of a Banach space
as a subset of the ambient space.
Then, for every $p \in \cM$,
$\T_p\cM$ forms a subspace of $\bbE$
of the same dimension as the dimension of the manifold.

The \emph{tangent bundle}, denoted by $\T\cM$, is a disjoint union of all
tangent spaces of $\cM$, in particular,
$\T\cM = \mybrace{(p, v) \mid p \in \cM \text{ and } v \in \T_p\cM}$.
The tangent bundle is itself a manifold of dimension $2 \dim(\cM)$.

If $\bbE$ is additionally a Hilbert space,
the \emph{normal space} of $\cM$ at $p \in \cM$, denoted $\N_p\cM$,
is the orthogonal complement of the tangent space at $p$ in $\bbE$,
i.e., $\N_p\cM = {(\T_p\cM)}^{\perp}$.

A \emph{scalar field} over $\cM$ is a function $\fundef{f}{\cM}{\bbR}$.
A \emph{vector field} over $\cM$ is a function $\fundef{X}{\cM}{\T\cM}$
such that $X(p) \in \T_p\cM$ for all $p \in \cM$.

\subsection{Riemannian Manifolds and Submanifolds}

A \emph{metric} over $\cM$ is a collection of inner products
$\ip{\cdot}{\cdot}_p$ over $\T_p\cM$ for all $p \in \cM$.
A \emph{Riemannian metric} is a metric $\ip{\cdot}{\cdot}_p$ such that
$p \mapsto \ip{X(p)}{Y(p)}_p$ is a smooth scalar field
for all smooth vector fields $X, Y$.
A \emph{Riemannian manifold} is a manifold with a Riemannian metric.

A \emph{Riemannian submanifold} is an embedded submanifold
that inherits the Riemannian metric from the ambient manifold.
Note that it is always possible for an embedded submanifold of a Riemannian
manifold to inherit the metric because,
if $\cM \subseteq \overline{\cM}$,
then $\T_p\cM \subseteq \T_p\overline{\cM}$ for all $p \in \cM$.

\subsection{Riemannian Optimization}

Now we have all the necessary background to summarize the basics of Riemannian
optimization.
A \emph{Riemannian optimization problem} is an optimization problem of the form
\begin{equation*}
  \minimize_{p \in \cM} f(p),
\end{equation*}
where $\cM$ is a Riemannian manifold and $f$ is a smooth scalar field.
The \emph{gradient} at $p \in \cM$ of a smooth scalar field $f$ on a
Riemannian manifold $\cM$ is the unique tangent vector $\grad f(p)$ in $\T_p\cM$
such that
\begin{equation*}
  Df(p)[v] = \ip{\grad f(p)}{v}_p
\end{equation*}
for all $v \in \T_p\cM$.
If $\cM$ is a Riemannian submanifold of a Hilbert space $\bbE$,
the gradient can be obtained via
\begin{equation}\label{eq:riemann-grad-proj}
  \grad f(p)
  = \Proj_p\myparen*{\nabla \of(p)},
\end{equation}
where
$\fundef{\Proj_p}{\bbE}{\T_p\cM}$ is the orthogonal projector onto $\T_p\cM$,
$\of$ is a smooth extension of $f$ to an open neighborhood of $\cM$ in $\bbE$,
and $\nabla \of(p)$ is the Euclidean gradient of $\of$ at $p$.

As for Euclidean optimization,
the first-order necessary optimality condition for $p$ to be a local minimum is
\begin{equation}\label{eq:riemann-grad-zero}
  \grad f(p) = 0,
\end{equation}
If $\cM$ is an Riemannian submanifold, using~\eqref{eq:riemann-grad-proj},
we have that the necessary optimality condition~\eqref{eq:riemann-grad-zero} is
equivalent to
\begin{equation}\label{eq:riemann-orth}
  \nabla \of(p) \perp \T_p\cM.
\end{equation}
A step of Riemannian gradient descent takes the form
\begin{equation}\label{eq:rgd}
  p_{k + 1} = \R_{p_k}\myparen*{-\alpha_k \grad f(p_k)},
\end{equation}
where $\fundef{\R_{p_k}}{\T_{p_k}\cM}{\cM}$ is a \emph{retraction} and
$\alpha_k$ are positive.
In simple terms,
a step of Riemannian gradient descent starts from the current iterate $p_k$,
moves in the direction of the negative gradient at $p_k$, and
then returns to the manifold $\cM$ via retraction.

Generally, a retraction $\R$ is a mapping from the tangent bundle $\T\cM$
to the manifold $\cM$ such that $\R_p$, the restriction of $\R$ to $\T_p\cM$,
is ``locally rigid'' around zero, i.e.,
\begin{equation*}
  \R_p(0) = p
  \ \text{ and } \
  D\R_p(0) = \operatorname{id},
\end{equation*}
where $\fundef{\operatorname{id}}{\T_p\cM}{\T_p\cM}$ is the identity map.
The definition of a retraction can be relaxed so as to be defined only over an
open subset of the tangent bundle containing all of the tangent zero vectors
i.e., containing $(p, 0)$ for all $p \in \cM$.
We employ these Riemannian optimization concepts,
especially~\eqref{eq:riemann-grad-proj}--\eqref{eq:rgd},
in the theoretical and algorithmic developments presented
in the following sections.

\section{Rational Functions Form a Manifold}%
\label{sec:rat-fun-manifold}

Let $\ManRatStab$ be the set of stable rational functions:
\begin{equation*}
  \ManRatStab = \mybrace*{\hC \myparen*{s I - \hA}^{-1} \hB :
    \begin{array}{@{}l@{}}
      \hA \in \Crr \textnormal{ is Hurwitz}, \\
      \hB \in \Crm,\
      \hC \in \Cpr,                          \\
      \myparen*{\hA, \hB, \hC} \text{ is minimal}
    \end{array}
  },
\end{equation*}
where minimality means that the realization $\myparen{\hA, \hB, \hC}$ is of the
least possible order.
We know that $\ManRatStab$ is a subset of $\Hardy$.
In this section,
we prove that $\ManRatStab$ is an embedded submanifold of $\Hardy$,
an important result  employed
in later sections.
Additionally, we show that $\ReManRatStab$,
the set of real stable rational functions,
is an embedded submanifold of $\ReHardy$.

\subsection{Previous Work on Manifolds of Rational Functions}%
\label{sec:manifold}

Let $\ReManMatStab$ be the set of stable, minimal state-space realizations,
i.e., $\ReManMatStab \subset \Rrr \times \Rrm \times \Rpr$ and
$\myparen{\hA, \hB, \hC} \in \ReManMatStab$
where
$\hA$ is Hurwitz and
$\myparen{\hA, \hB, \hC}$ is minimal.
Similarly, we use $\ReManRat$ and $\ReManMat$ to denote the sets corresponding
to, respectively, $\ReManRatStab$ and $\ReManMatStab$
but without the stability assumptions.

For the \ac{siso} case ($m = p = 1$),
Brockett~\cite{Bro76} identified the elements of $\ReManRat$ by the
$2 r$ coefficients in the ``polynomial over monic polynomial'' form and
showed that this set forms an embedded submanifold of $\bbR^{2 r}$
consisting of $r + 1$ connected components.
As a corollary to~\cite[Theorem~1]{Bro76},
one obtains the same result for stable rational functions.
Additionally, we find that $\ManRatStab$ is connected for $m = p = 1$.

Hazewinkel~\cite[Theorem~2.5.17]{Haz77} constructed a manifold structure over
the quotient set $\ReManMat / {\sim}$
(based on Hazewinkel and Kalman~\cite{HazK76})
where $\sim$ is the state-space equivalence relation, i.e.,
$\myparen{\hA_1, \hB_1, \hC_1}
  \sim
  \myparen{\hA_2, \hB_2, \hC_2}$
if and only if
there is an invertible matrix $T \in \Rrr$ such that
\begin{equation*}
  \hA_1 = T \hA_2 T^{-1},\
  \hB_1 = T \hB_2,\
  \hC_1 = \hC_2 T^{-1}.
\end{equation*}
Delchamps asserts
$\ReManRat$ (and $\ManRat$) are analytic manifolds of
(complex) dimension $r (m + p)$
(\cite[Theorem~2.1]{Del85};
Delchamps attributes the result to Clark~\cite{Cla76}).
Byrnes and Duncan~\cite{ByrD82} show in the proof of Proposition~A.7 that
$\ReManRat$ is connected when $\max(m, p) > 1$.
The proof can be extended to stable rational functions in $\ReManRatStab$ and
to complex rational functions in $\ManRatStab$.

\subsection{Stable Systems Form an Embedded Submanifold}

Our first main result is to establish the manifold structure of $\ManRatStab$
as an embedded submanifold of the Hardy space $\Hardy$
(in contrast to earlier results which establish a manifold
structure over different spaces).
\begin{thm}\label{thm:submanifold}
  The manifold $\ManRatStab$ ($\ReManRatStab$) is an embedded submanifold of
  $\Hardy$ ($\ReHardy$).
\end{thm}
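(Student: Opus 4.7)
The plan is to leverage the quotient manifold structure on $\ManRatStab$ that was recalled in \Cref{sec:manifold} and verify that the natural inclusion $\fundef{\iota}{\ManRatStab}{\Hardy}$ is a smooth embedding. This requires three ingredients: smoothness of $\iota$, injectivity of $d\iota$ with closed image (immersion), and $\iota$ being a homeomorphism onto its image.

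I begin by introducing the realization map $\fundef{\Phi}{\ManMatStab}{\Hardy}$ given by $\Phi(\hA,\hB,\hC) = \hC(sI-\hA)^{-1}\hB$. Since $\hA$ is Hurwitz, the image lies in $\Hardy$, and the identity $\normHtwo{\Phi(\hA,\hB,\hC)}^2 = \trace{\hC P \hC\herm}$, where $P$ solves the Lyapunov equation $\hA P + P \hA\herm + \hB \hB\herm = 0$, together with smooth dependence of Lyapunov solutions on their coefficients, shows that $\Phi$ is smooth as a map into the Hilbert space $\Hardy$. Since $\Phi$ is $\GL(r,\bbC)$-invariant under state-space equivalence, and since the quotient projection $\fundef{\pi}{\ManMatStab}{\ManRatStab}$ is a smooth submersion from the quotient construction, $\Phi$ factors as $\iota \circ \pi$, from which smoothness of $\iota$ follows in every local chart.

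Injectivity of $\iota$ itself is immediate from the uniqueness of minimal realizations up to state-space equivalence. For the differential, a classical reachability/observability computation shows that at a minimal $(\hA,\hB,\hC)$ the kernel of $d\Phi$ consists exactly of the infinitesimal state-space equivalences $(\hA T - T\hA,\, T\hB,\, -\hC T)$ with $T \in \Crr$, i.e., the tangent space to the $\GL(r,\bbC)$-orbit. It follows that $d\iota$ is injective on the quotient tangent space, and since its image has finite (complex) dimension $r(m+p)$, it is automatically a closed subspace of $\Hardy$, so $\iota$ is an immersion.

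The main obstacle is showing that $\iota$ is a topological embedding, i.e., that the intrinsic manifold topology on $\ManRatStab$ coincides with the subspace topology inherited from $\Hardy$. Continuity of $\iota$ gives one direction; for the reverse, given $H_n \to H_\infty$ in $\Hardy$ with all $H_n, H_\infty \in \ManRatStab$, one must produce minimal realizations of $H_n$ that converge, modulo $\GL(r,\bbC)$, to one of $H_\infty$. I would construct this by exhibiting a local continuous section of $\Phi$ around $H_\infty$: choose a continuously varying canonical form --- for instance, a controller canonical form adapted to a fixed selection of columns, or a balanced realization --- whose parameters depend continuously on the transfer function in a neighborhood of $H_\infty$. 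Existence of such a section rests on the persistence of minimality and Hurwitz stability under small $\Htwo$-perturbations and is the technical heart of the proof. Finally, the real case follows from exactly the same construction with real coefficients, together with the observation that $\ReHardy$ is a closed real Hilbert subspace of $\Hardy$ and that $\ReManRatStab = \ManRatStab \cap \ReHardy$.
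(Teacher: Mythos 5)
Your overall strategy---equip $\ManRatStab$ with the Hazewinkel--Kalman quotient structure and show the natural inclusion into $\Hardy$ is a smooth embedding---is exactly the paper's, and your first two steps (smoothness of the realization map, hence of the inclusion in charts; injectivity of the differential with image closed because it is finite-dimensional of dimension $r(m+p)$) match the paper's argument in substance. The problem is the third step. You correctly identify that the heart of the matter is showing the inverse of the inclusion is continuous, i.e., that $\Htwo$-convergence of transfer functions forces convergence of realizations modulo $\GL(r,\bbC)$, but your proposal for this step is not a proof: you posit a ``continuously varying canonical form whose parameters depend continuously on the transfer function in a neighborhood of $H_\infty$.'' Continuity with respect to which topology? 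If you mean the subspace topology from $\Hardy$, then the existence of such a section is essentially equivalent to the statement being proved, so as written the step is circular; what is missing is a concrete mechanism that converts convergence in the $\Htwo$ norm into convergence of finitely many realization (or chart) parameters.

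The paper supplies precisely this mechanism. It recovers the Markov parameters of each iterate by contour integration, $M_i^{(k)} = \frac{1}{2\pi\imag}\oint_{\Gamma_R} s^i \hH_k(s)\,\dif{s}$, over a fixed contour $\Gamma_R$ enclosing all poles (possible because the functions are stable rational of bounded degree), and then uses Cauchy--Schwarz to bound $\normtwo{M_i^{(k)} - M_i}$ by the $\Ltwo$ norm of $\hH_k - \hH$ on $\Gamma_R$, which is controlled by $\Htwo$-convergence. Since the Hazewinkel--Kalman charts are built from (nice selections of) Markov parameters, convergence of the Markov parameters yields continuity of the inverse of the inclusion. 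If you want to salvage your canonical-form route, you would still need an analogous quantitative link (e.g., that the moments or Hankel data defining your canonical form are continuous functionals on $\ManRatStab$ in the $\Htwo$ topology), together with an argument that minimality and the Hurwitz property persist along the sequence---the latter is immediate here only because all terms and the limit are assumed to lie in $\ManRatStab$. Without such a link, the embedding claim remains unestablished.
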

\begin{proof}
  Exploiting the ideas from the proof of Theorem~2.1 in~\cite{AlpBG94} for
  discrete-time dynamical systems,
  we will prove the result by showing that the natural inclusion
  $\fundef{\immj}{\ManRatStab}{\Hardy}$
  is a smooth embedding where the manifold structure of $\ManRatStab$ is from
  Hazewinkel and Kalman~\cite{HazK76}.
  We show that $\immj$ is an embedding, i.e.,
  \begin{enumerate}
    \item $\immj$ is differentiable,
    \item the differential $\D \immj (\hH)$ is injective and
          its image $\image{\D \immj(\hH)}$ is closed for all $\hH \in \ManRatStab$,
          and
    \item $\immj^{-1}$ is continuous.
  \end{enumerate}
  Define $\fundef{\Pi}{\ManMatStab}{\ManRatStab}$ by
  $\Pi(\hA, \hB, \hC)[s] = \hC {(s I - \hA)}^{-1}\!\hB$.
  Using the charts $\fundef{\varphi_i}{U_i}{\ManMatStab}$ of $\ManRatStab$
  from~\cite{HazK76},
  $\immj$ can be locally expressed as $\immj \circ \Pi \circ \varphi_i$.
  Since $\varphi_i$ is smooth by definition,
  we need only show that $\fundef{\immj \circ \Pi}{\ManMatStab}{\Hardy}$
  is smooth.
  We see that
  \begin{align*}
     &
    \D_{\hA} (\immj \circ \Pi)\myparen*{\hA, \hB, \hC}[X]
    = -\hC \myparen*{s I - \hA}^{-1} X \myparen*{s I - \hA}^{-1} \hB, \\
     &
    \D_{\hB} (\immj \circ \Pi)\myparen*{\hA, \hB, \hC}[W]
    = \hC \myparen*{s I - \hA}^{-1} W,                                \\
     &
    \D_{\hC} (\immj \circ \Pi)\myparen*{\hA, \hB, \hC}[V]
    = V \myparen*{s I - \hA}^{-1} \hB,
  \end{align*}
  therefore, $\immj$ is smooth, proving the first assertion.

  For the second assertion,
  note that $\image{\D \immj(\hH)}$ consists of rational matrix-valued functions
  having the form
  \begin{align*}
     &
    V \myparen*{s I - \hA}^{-1} \hB
    + \hC \myparen*{s I - \hA}^{-1} W \\*
     &
    - \hC \myparen*{s I - \hA}^{-1} X \myparen*{s I - \hA}^{-1} \hB
  \end{align*}
  for arbitrary $X \in \Cnn$, $W \in \Cnm$, and $V \in \Cpn$.
  Since the domain of $\D \immj(\hH)$ is $\T_{\hH}\ManRatStab$,
  it follows that $\dim\image{\D \immj(\hH)} \le \dim\ManRatStab = r (m + p)$.
  Using the same procedure as in~\cite{AlpBG94},
  we can find a linearly independent set with $r (m + p)$ elements,
  from which we conclude that $\D \immj(\hH)$ is injective.
  The image of $\D \immj(\hH)$ is closed since it is finite-dimensional,
  which proves the second assertion on $\immj$.

  For the third assertion,
  we show that $\immj^{-1}$ is continuous
  by showing the convergence of Markov parameters
  induced by convergence of $(\hH_k)$ in $\Hardy$
  for continuous-time dynamical system.
  Let $(\hH_k)$ be a convergent sequence in the image of $\immj$
  with $\hH$ as the limit.
  Let $M_i^{(k)}$ and $M_i$ denote the $i$th Markov parameter of
  $\hH_k$ and $\hH$, respectively.
  The Markov parameters $M_i^{(k)}$ and $M_i$ can be determined,
  via contour integrals, as
  \begin{align*}
    M_i^{(k)}
    =
    \frac{1}{2 \pi \imag}
    \oint_{\Gamma_R}
    s^i
    \hH_k(s)
    \dif{s},\
    M_i
    =
    \frac{1}{2 \pi \imag}
    \oint_{\Gamma_R}
    s^i
    \hH(s)
    \dif{s},
  \end{align*}
  where
  $\Gamma_R
    =
    [-\imag R, \imag R]
    \cup \{R e^{\imag \omega} : \omega \in (\frac{\pi}{2}, \frac{3 \pi}{2})\}$
  and $R > 0$ is big enough such that
  $\Gamma_R$ encloses all of the poles of $\hH_k$ and $\hH$,
  for all $k \ge 1$.
  Then we have that
  \begin{align*}
     & \normtwo*{M_i^{(k)} - M_i} \\
     & \qquad  \le
    \frac{1}{2 \pi}
    \int_a^b
    \abs{\gamma(t)}^i
    \normtwo*{\hH_k(\gamma(t)) - \hH(\gamma(t))}
    \abs{\gamma'(t)}
    \dif{t},
  \end{align*}
  where $\fundef{\gamma}{[a, b]}{\bbC}$ is a parametrization of $\Gamma_R$.
  The Cauchy-Schwarz inequality gives
  \begin{align*}
     &
    \normtwo*{M_i^{(k)} - M_i}
    \le
    \frac{1}{2 \pi}
    \myparen*{
      \int_a^b
      \abs{\gamma(t)}^{2 i}
      \abs{\gamma'(t)}^2
      \dif{t}
    }^{1/2}          \\*
     & \qquad \times
    \myparen*{
      \int_a^b
      \normtwo*{\hH_k(\gamma(t)) - \hH(\gamma(t))}^2
      \dif{t}
    }^{1/2}.
  \end{align*}
  Since convergence of the sequence of rational functions $\hH_k$ of bounded
  degree in $\Hardy$ implies $\mathcal{L}_2$ convergence on $\Gamma_R$,
  we conclude $M_i^{(k)}$ converges to $M_i$.

  Finally, it follows similarly that $\ReManRatStab$ is an embedded submanifold
  of $\ReHardy$.
\end{proof}

\section{\texorpdfstring{$\Htwo$}{H2} Minimization as a Riemannian
  Optimization}%
\label{sec:h2-riemann}

Theorem~\ref{thm:submanifold} shows that $\ManRatStab$ is an embedded
submanifold of $\Hardy$.
Building on that result, we now give an interpretation of the $\Htwo$-optimal
\ac{mor} problem~\eqref{eq:h2-opt-prob} as a Riemannian optimization problem.
We then compute the Riemannian gradient in this setting,
which will be used to derive necessary optimality conditions and
facilitate the interpretation of \ac{irka} as a Riemannian gradient descent
method.

First, we establish $\ManRatStab$ as a Riemannian manifold.
Clearly, it is possible to define a Riemannian metric over $\ManRatStab$
by inheriting the inner product from the ambient $\Hardy$ space.
In this way,
$\ManRatStab$ forms a Riemannian submanifold and
the $\Htwo$-optimal \ac{mor} problem~\eqref{eq:h2-opt-prob} can be written as
a Riemannian optimization problem
\begin{equation}\label{eq:h2-opt-prob-riemann}
  \minimize_{\hH \in \ManRatStab}
  f(\hH) = \normHtwo*{H - \hH}^2.
\end{equation}
To find $\grad f(\hH)$ using~\eqref{eq:riemann-grad-proj},
we first need to find a smooth extension of $f$
to an open neighborhood of $\ManRatStab$ in $\Hardy$.
To do so, we define $\fundef{\of}{\Hardy}{\bbR}$ by
$\of(G) = \normHtwo{H - G}^2$,
which satisfies $\of|_{\ManRatStab} = f$.
To find the Euclidean gradient of the extension $\of$, note that
\begin{align*}
  \of(G + \Delta G)
   & =
  \of(G)
  + 2 \Real{\ipHtwo{G - H}{\Delta G}}
  + \normHtwo{\Delta G}^2.
\end{align*}
If we restrict to real rational functions,
then we directly find that
$\nabla \of|_{\ReHardy}(G) = 2 (G - H)$ for all $G \in \ReHardy$.
Since $\fundef{\of}{\Hardy}{\bbR}$ is not complex differentiable
(the only real-valued analytic functions are constant functions),
we need a different concept of a complex gradient.
Following~\cite{NisAP08} (based on Wirtinger calculus), we use
$2 \nabla_{\overline{G}} \of
  = \nabla_{G^{\Re}} \of + \imag \nabla_{G^{\Im}} \of$,
where $G = G^{\Re} + \imag G^{\Im}$ and $G^{\Re}, G^{\Im}$ are real.
We find that
\begin{align*}
  \Real{\ipHtwo{G - H}{\Delta G}} = {}
   & \ipHtwo*{G^{\Re} - H^{\Re}}{\Delta G^{\Re}}    \\
   & + \ipHtwo*{G^{\Im} - H^{\Im}}{\Delta G^{\Im}}.
\end{align*}
Thus,
$\nabla_{G^{\Re}} \of(G^{\Re}) = 2 (G^{\Re} - H^{\Re})$ and
$\nabla_{G^{\Im}} \of(G^{\Im}) = 2 (G^{\Im} - H^{\Im})$, and
\begin{equation}\label{eq:h2-ext-grad}
  \nabla_{\overline{G}} \of(G) = G - H.
\end{equation}
Then, from~\eqref{eq:riemann-grad-proj},
it follows that the Riemannian gradient of the $\Htwo$-minimization problem is
given by
\begin{equation}\label{eq:riemann-grad-h2}
  \grad f(\hH)
  = \Proj_{\hH}\myparen*{\nabla_{\overline{\hH}} \of(\hH)}
  = \Proj_{\hH}\myparen*{\hH - H},
\end{equation}
where $\fundef{\Proj_{\hH}}{\Hardy}{\T_{\hH}\ManRatStab}$ is the orthogonal
projector onto $\T_{\hH}\ManRatStab$.

\section{Geometric Interpretation of Necessary Optimality Conditions and IRKA}%
\label{sec:geom}

Meier and Luenberger~\cite{MeiL66} derived
the interpolatory $\Htwo$-optimality conditions~\eqref{eq:h2-opt-cond-interp}
and their geometric interpretation
for the \ac{siso} case, i.e., $m = p = 1$.
Using our analysis from \Cref{sec:h2-riemann},
we generalize these results to \ac{mimo} systems using
Riemannian optimization theory.
In particular, in \Cref{sec:geointerpolate},
we show that any (locally) $\Htwo$-optimal \ac{rom} $\hH$ is such that
the error system $H - \hH$ is orthogonal to the manifold $\ManRatStab$.
Next, we show that the orthogonality conditions
are equivalent to the known interpolatory necessary optimality
conditions~\eqref{eq:h2-opt-cond-interp}.
Then, in \Cref{sec:irka-geom},
these results lead to a geometric interpretation of \ac{irka}.
In turn, this leads to the interpretation of \ac{irka} as a Riemannian
gradient descent method.

\subsection{Geometry of Interpolatory Conditions}%
\label{sec:geointerpolate}

Having interpreted the $\Htwo$-minimization as
a Riemannian optimization problem,
it follows that the necessary optimality condition~\eqref{eq:riemann-orth},
in view of~\eqref{eq:h2-ext-grad}, becomes
\begin{equation}\label{eq:orth-tan-space-cond}
  H - \hH \perp \T_{\hH}\ManRatStab.
\end{equation}
This orthogonality result is illustrated in \Cref{fig:h2-opt-cond}.
Therefore, $\Htwo$-optimal \acp{rom} are necessarily such that
the error system is orthogonal to the manifold $\ManRatStab$.

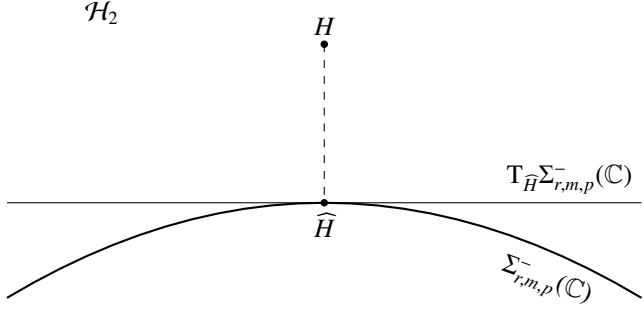
\begin{figure}[tb]
  \centering
  \begin{tikzpicture}[x=1em, y=1em, scale=1.2]
    \def\xmin{-10}
    \def\xmax{10}
    \def\coeff{-0.03}
    \def\H{5}

    \node at (-7, 6) {$\Hardy$};

    \coordinate (H) at (0, \H);
    \draw[fill] (H) circle (0.1) node[above] {$H$};

    \draw[domain=\xmin:\xmax, thick] plot (\x, \coeff*\x*\x);
    \node[rotate=-20] at (\xmax-3, \coeff*\xmax*\xmax+0.5) {$\ManRatStab$};

    \coordinate (Hr) at (0, 0);
    \draw[fill] (Hr) circle (0.1) node[below] {$\hH$};

    \draw (\xmin, 0) -- (\xmax, 0) node[above left] {$\T_{\hH}\ManRatStab$};

    \draw[dashed] (H) -- (Hr);
  \end{tikzpicture}
  \caption{
    Necessary $\Htwo$-optimality conditions in terms of orthogonality.
    $H$ is the full-order transfer function,
    $\ManRatStab$ is the manifold of stable rational functions of
    McMillan degree $r$,
    $\hH$ is the $\Htwo$-optimal reduced-order transfer function,
    $\T_{\hH} \ManRatStab$ is the tangent space of $\ManRatStab$ at $\hH$.
  }%
  \label{fig:h2-opt-cond}
\end{figure}

To derive the interpolatory necessary conditions
from~\eqref{eq:orth-tan-space-cond},
we first need to determine the tangent vectors of $\ManRatStab$ at $\hH$.
Our next result achieves this result for \ac{mimo} systems with simple poles.
\begin{lem}\label{lem:tan-space-span}
  Let $\hH \in \ManRatStab$,
  with a pole-residue form
  $\hH(s) = \sum_{i = 1}^r \frac{c_i b_i\herm}{s - \lambda_i}$,
  have pairwise distinct poles.
  Then the tangent space $\T_{\hH}\ManRatStab$ is spanned by
  \begin{equation}\label{eq:tan-space-span}
    \frac{e_j b_i\herm}{s - \lambda_i}, \ \
    \frac{c_i e_{\ell}\herm}{s - \lambda_i}, \ \
    \frac{c_i b_i\herm}{\myparen*{s - \lambda_i}^2},
  \end{equation}
  for $i = 1, 2, \ldots, r$, $j = 1, 2, \ldots, p$, $\ell = 1, 2, \ldots, m$.
\end{lem}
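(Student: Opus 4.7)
The plan is to construct explicit smooth curves in $\ManRatStab$ through $\hH$ whose velocities at $t=0$ realize each of the three listed families of tangent vectors, and then invoke a dimension count to conclude that their span equals $\T_{\hH}\ManRatStab$.

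For the first family, fix $i$ and $j$ and consider
\[
  \hH_t(s) = \sum_{k \ne i} \frac{c_k b_k\herm}{s - \lambda_k} + \frac{(c_i + t e_j) b_i\herm}{s - \lambda_i}.
\]
For $|t|$ small, the residue $(c_i + t e_j) b_i\herm$ stays a nonzero rank-one matrix, all $r$ poles remain distinct and stable, and hence $\hH_t \in \ManRatStab$; its derivative at $t=0$ is $e_j b_i\herm/(s - \lambda_i)$. The second family is obtained analogously by perturbing $b_i \mapsto b_i + t e_\ell$. For the third family, consider
\[
  \hH_t(s) = \sum_{k \ne i} \frac{c_k b_k\herm}{s - \lambda_k} + \frac{c_i b_i\herm}{s - (\lambda_i + t)},
\]
which again lies in $\ManRatStab$ for small $|t|$ and has derivative $c_i b_i\herm/(s - \lambda_i)^2$ at $t=0$. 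Since $\hH$ has McMillan degree $r$ with $r$ distinct poles, each residue $c_i b_i\herm$ must be nonzero and hence $b_i \ne 0$, $c_i \ne 0$, which is exactly what guarantees that minimality is preserved along the curves. Thus all listed vectors lie in $\T_{\hH}\ManRatStab$.

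To promote inclusion to equality, I appeal to dimension counting. By the results cited in \Cref{sec:manifold}, $\dim \ManRatStab = r(m+p) = \dim \T_{\hH}\ManRatStab$, so it suffices to show the three families span a subspace of dimension $r(m+p)$. The $r$ vectors in the third family have double poles and are linearly independent both of each other and of the first two families, whose members have only simple poles at $\lambda_1, \ldots, \lambda_r$. For each fixed $i$, the first two families together span $\{(\alpha b_i\herm + c_i \beta\herm)/(s - \lambda_i) : \alpha \in \Cp,\ \beta \in \Cm\}$; the rank-one equation $\alpha b_i\herm + c_i \beta\herm = 0$ with $b_i, c_i \ne 0$ forces $\alpha = \gamma c_i$ and $\beta = -\overline{\gamma} b_i$ for some scalar $\gamma$, so the kernel is one-dimensional and this subspace has dimension $p + m - 1$. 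Summing across $r$ poles and adding the third family yields $r(p + m - 1) + r = r(m + p)$ independent tangent vectors, closing the argument.

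The main technical point to watch is verifying the curves genuinely stay in $\ManRatStab$ — retaining $r$ distinct stable poles with nonzero rank-one residues — which is a clean continuity argument once the nonvanishing of $b_i$ and $c_i$ is isolated. A secondary bookkeeping task is correctly identifying the unique linear relation tying the $c_i$- and $b_i$-directed tangent vectors at each pole, which is what brings the naive count $r(p+m+1)$ down to the correct $r(p+m)$.
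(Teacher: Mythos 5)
Your proof is correct, but it closes the argument by a different route than the paper. The paper's proof handles the inclusion $\T_{\hH}\ManRatStab \subseteq \operatorname{\mathsf{span}}\{\cdot\}$ directly: it takes an \emph{arbitrary} smooth curve $\gamma$ through $\hH$, uses that functions with simple poles form a relatively open subset of $\ManRatStab$ to write $\gamma(t)$ in pole-residue form with data $\lambda_i(t), b_i(t), c_i(t)$, and differentiates at $t=0$; the reverse inclusion then follows because the derivative data $\lambda_i'(0), b_i'(0), c_i'(0)$ can be prescribed arbitrarily (which is, in effect, your explicit-curve construction). You instead prove only the easy inclusion, via the same kind of explicit perturbation curves, and obtain equality from a dimension count, using $\dim \ManRatStab = r(m+p)$ as cited in \Cref{sec:manifold} (the same fact the paper invokes in the proof of \Cref{thm:submanifold}). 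Your per-pole bookkeeping is right: the single relation $c_i b_i\herm$ shared by the two simple-pole families gives $p+m-1$ dimensions per pole, plus one from the double-pole term, and partial-fraction uniqueness separates distinct poles, for a total of $r(m+p)$ matching the tangent space (note only the cosmetic point that the map $(\alpha,\beta)\mapsto \alpha b_i\herm + c_i\beta\herm$ is conjugate-linear in $\beta$, which does not affect the count). The trade-off: your route leans on the externally cited dimension formula but avoids having to argue that poles and residues vary smoothly along an arbitrary curve, a step the paper's proof uses implicitly; the paper's route is self-contained on that point and, as a by-product, reconfirms the dimension rather than assuming it.
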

\begin{proof}
  Let $\fundef{\gamma}{I}{\ManRatStab}$ be a smooth curve
  where $0 \in I \subset \bbR$ and
  $\gamma(0) = \hH$.
  Since rational functions with simple poles form a relatively open subset of
  $\ManRatStab$,
  there is a neighborhood of $0$ such that $\gamma(t)$ has simple poles
  for all $t$ in that neighborhood.
  Let
  $\gamma(t)(s) = \sum_{i = 1}^r \frac{c_i(t) b_i(t)\herm}{s - \lambda_i(t)}$
  with
  $\lambda_i(0) = \lambda_i$,
  $b_i(0) = b_i$, and
  $c_i(0) = c_i$.
  Then
  \begin{align*}
    \gamma'(t)(s) {} =
     &
    \sum_{i = 1}^r
    \myparen*{
      \frac{c_i'(t) b_i(t)\herm}{s - \lambda_i(t)}
      + \frac{c_i(t) b_i'(t)\herm}{s - \lambda_i(t)}
      + \frac{c_i(t) b_i(t)\herm \lambda_i'(t)}{
        \myparen*{s - \lambda_i(t)}^2}
    }.
  \end{align*}
  Therefore,
  \begin{align*}
    \gamma'(0)(s) {} =
     &
    \sum_{i = 1}^r
    \myparen*{
      \frac{c_i'(0) b_i\herm}{s - \lambda_i}
      + \frac{c_i b_i'(0)\herm}{s - \lambda_i}
      + \frac{c_i b_i\herm \lambda_i'(0)}{
        \myparen*{s - \lambda_i}^2}
    },
  \end{align*}
  which is a linear combination of~\eqref{eq:tan-space-span},
  proving that all tangent vectors are in the span
  of~\eqref{eq:tan-space-span}.
  Conversely, since
  $\lambda_i'(0) \in \bbC$,
  $b_i'(0) \in \Cm$,
  $c_i'(0) \in \Cp$
  are arbitrary,
  it follows that all elements of the span of~\eqref{eq:tan-space-span} are
  tangent vectors.
\end{proof}
Now with the full characterization of the tangent vectors,
we can give an interpolatory interpretation of orthogonality.
\begin{thm}\label{thm:orth-interp}
  Let $\hH \in \ManRatStab$,
  with a pole-residue form
  $\hH(s) = \sum_{i = 1}^r \frac{c_i b_i\herm}{s - \lambda_i}$,
  have pairwise distinct poles.
  Furthermore, let $F \in \Hardy$ be arbitrary.
  Then
  \begin{equation}\label{eq:orth}
    F
    \perp
    \T_{\hH}\ManRatStab
  \end{equation}
  if and only if
  \begin{equation}\label{eq:interp}
    F\myparen*{-\overline{\lambda_i}} b_i = 0,\
    c_i\herm F\myparen*{-\overline{\lambda_i}} = 0,\
    c_i\herm F'\myparen*{-\overline{\lambda_i}} b_i = 0
  \end{equation}
  for $i = 1, 2, \ldots, r$.
\end{thm}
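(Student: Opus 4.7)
The plan is to leverage \Cref{lem:tan-space-span}, which provides an explicit spanning set for $\T_{\hH}\ManRatStab$. Since this tangent space is finite-dimensional, the orthogonality condition~\eqref{eq:orth} is equivalent to the vanishing of $\ipHtwo{v}{F}$ for each $v$ in the three families listed in~\eqref{eq:tan-space-span} (using conjugate symmetry of the $\Hardy$ inner product). I would compute each such inner product in closed form and match its vanishing with one of the three interpolation conditions in~\eqref{eq:interp}.

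The central tool is a reproducing-kernel identity for $\Hardy$: for any scalar $g \in \Hardy$ and any $\mu$ in the open right half-plane,
\[
  \frac{1}{2\pi} \int_{-\infty}^{\infty}
    \frac{g(\imag \omega)}{\mu - \imag \omega}\, \dif{\omega}
  = g(\mu).
\]
This follows by substituting $z = \imag\omega$, closing the imaginary axis with a large semicircle in $\bbC^+$ (where $g$ is analytic), and applying the residue theorem at the simple pole $z = \mu$; the semicircle contribution vanishes by the $\Hardy$ growth bound on $g$. Reducing the matrix-valued case via the cyclic identity $\trace{b c\herm F(\imag \omega)} = c\herm F(\imag \omega) b$, and setting $\mu = -\overline{\lambda}$ with $\lambda$ in the open left half-plane, yields
\[
  \ipHtwo*{\frac{c b\herm}{s - \lambda}}{F}
  = c\herm F\myparen*{-\overline{\lambda}} b,
\]
and differentiating once in $\mu$ produces the second-order analogue
\[
  \ipHtwo*{\frac{c b\herm}{(s - \lambda)^2}}{F}
  = -c\herm F'\myparen*{-\overline{\lambda}} b.
\]

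Substituting the three basis families from~\eqref{eq:tan-space-span} into these formulas directly produces the three interpolation conditions: the family $\frac{e_j b_i\herm}{s - \lambda_i}$ indexed by $j = 1,\ldots,p$ produces $e_j\herm F(-\overline{\lambda_i}) b_i$, whose vanishing over all $j$ is equivalent to $F(-\overline{\lambda_i}) b_i = 0$; the family $\frac{c_i e_\ell\herm}{s - \lambda_i}$ indexed by $\ell = 1,\ldots,m$ yields $c_i\herm F(-\overline{\lambda_i}) = 0$; and the second-order family $\frac{c_i b_i\herm}{(s - \lambda_i)^2}$ yields $c_i\herm F'(-\overline{\lambda_i}) b_i = 0$. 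Running $i$ over $1, \ldots, r$ gives the equivalence of~\eqref{eq:orth} and~\eqref{eq:interp}. The only real obstacle is the bookkeeping around conjugates — it is precisely the conjugation in the first slot of $\ipHtwo{\cdot}{\cdot}$ that produces the reflection $\lambda \mapsto -\overline{\lambda}$ — together with justifying that the large-semicircle contribution vanishes under the $\Hardy$ bound; the sign introduced by differentiating in $\mu$ drops out of the final vanishing condition.
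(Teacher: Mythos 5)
Your proposal is correct and follows essentially the same route as the paper's proof: reduce the orthogonality condition via the spanning set of Lemma~\ref{lem:tan-space-span} and then evaluate the resulting inner products with the Cauchy/reproducing-kernel identities $\ipHtwo{c b\herm / (s - \lambda)}{F} = c\herm F(-\overline{\lambda}) b$ and $\ipHtwo{c b\herm / (s - \lambda)^2}{F} = -c\herm F'(-\overline{\lambda}) b$. The only difference is that the paper cites these identities (Lemma~2.1.4 of the referenced monograph) whereas you re-derive them by contour integration, which is fine.
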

\begin{proof}
  Using Lemma~\ref{lem:tan-space-span},
  it follows that~\eqref{eq:orth} is equivalent to
  \begin{equation}\label{eq:orth-inner-prod}
    \begin{aligned}
      \ipHtwo*{\frac{e_j b_i\herm}{s - \lambda_i}}{F}
       & = 0,
       & j = 1, 2, \ldots, p,    \\
      \ipHtwo*{\frac{c_i e_{\ell}\herm}{s - \lambda_i}}{F}
       & = 0,
       & \ell = 1, 2, \ldots, m, \\
      \ipHtwo*{\frac{c_i b_i\herm}{\myparen*{s - \lambda_i}^2}}{F}
       & = 0,
    \end{aligned}
  \end{equation}
  for $i = 1, 2, \ldots, r$.
  Using Cauchy's integral formula gives that,
  for any $G \in \Hardy$, $\lambda \in \bbC^-$, $b \in \Cm$, and $c \in \Cp$
  (see, e.g.,~\cite[Lemma~2.1.4]{AntBG20}),
  \begin{align*}
    \ipHtwo*{\frac{c b\herm}{s - \lambda}}{G}
     & = c\herm G\myparen*{-\overline{\lambda}} b \quad \text{and} \\
    \ipHtwo*{\frac{c b\herm}{\myparen*{s - \lambda}^2}}{G}
     & = -c\herm G'\myparen*{-\overline{\lambda}} b.
  \end{align*}
  Therefore, it follows that~\eqref{eq:orth-inner-prod} is equivalent
  to~\eqref{eq:interp}.
\end{proof}
Applying Theorem~\ref{thm:orth-interp} to $F = H - \hH$ directly yields the
desired result.
\begin{cor}\label{cor:orth-interp-cond}
  Let $\hH \in \ManRatStab$,
  with a pole-residue form
  $\hH(s) = \sum_{i = 1}^r \frac{c_i b_i\herm}{s - \lambda_i}$,
  have pairwise distinct poles.
  Then the geometric necessary optimality
  condition~\eqref{eq:orth-tan-space-cond} is equivalent to the interpolatory
  necessary conditions~\eqref{eq:h2-opt-cond-interp}.
\end{cor}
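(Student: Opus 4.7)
The plan is that this is an essentially immediate corollary of \Cref{thm:orth-interp}, obtained by a direct specialization to $F = H - \hH$. The only thing to verify is that the hypotheses of the theorem are satisfied and that the substitution produces exactly the interpolatory conditions~\eqref{eq:h2-opt-cond-interp}.

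First, I would note that $H - \hH$ lies in $\Hardy$: the \ac{fom} is assumed stable (so $H \in \Hardy$) and $\hH \in \ManRatStab$ is stable by definition of $\ManRatStab$, so $H - \hH \in \Hardy$ as required by \Cref{thm:orth-interp}. Since $\hH$ has pairwise distinct poles by hypothesis of the corollary, \Cref{lem:tan-space-span} applies and so does \Cref{thm:orth-interp}.

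Next, I would apply \Cref{thm:orth-interp} with $F = H - \hH$. The left-hand side of the equivalence in the theorem becomes $H - \hH \perp \T_{\hH}\ManRatStab$, which is exactly the geometric condition~\eqref{eq:orth-tan-space-cond}. On the right-hand side, linearity of evaluation and differentiation yields
\begin{align*}
  (H - \hH)\myparen*{-\overline{\lambda_i}} b_i
    &= H\myparen*{-\overline{\lambda_i}} b_i - \hH\myparen*{-\overline{\lambda_i}} b_i, \\
  c_i\herm (H - \hH)\myparen*{-\overline{\lambda_i}}
    &= c_i\herm H\myparen*{-\overline{\lambda_i}} - c_i\herm \hH\myparen*{-\overline{\lambda_i}}, \\
  c_i\herm (H - \hH)'\myparen*{-\overline{\lambda_i}} b_i
    &= c_i\herm H'\myparen*{-\overline{\lambda_i}} b_i - c_i\herm \hH'\myparen*{-\overline{\lambda_i}} b_i,
\end{align*}
so the three conditions in~\eqref{eq:interp} with $F = H - \hH$ read exactly as the three equalities in~\eqref{eq:h2-opt-cond-interp} for each $i = 1, 2, \ldots, r$.

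There is no real obstacle here: the content has been fully absorbed into \Cref{thm:orth-interp} (which handled the hard step, namely translating the Hardy-space inner-product orthogonality on the spanning vectors of \Cref{lem:tan-space-span} into Hermite interpolation values at the mirrored poles via Cauchy's integral formula). The corollary is simply a rewrite of that equivalence once we match $F$ to the error system $H - \hH$.
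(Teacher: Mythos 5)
Your proposal is correct and matches the paper's argument exactly: the paper also obtains the corollary by applying Theorem~\ref{thm:orth-interp} with $F = H - \hH$, and your additional checks (that $H - \hH \in \Hardy$ and that linearity turns~\eqref{eq:interp} into~\eqref{eq:h2-opt-cond-interp}) are just a more explicit write-up of the same one-line specialization.
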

Note that the above results extend to real systems.
In particular, in Lemma~\ref{lem:tan-space-span},
the spanning set~\eqref{eq:tan-space-span} of $\T_{\hH}\ReManRatStab$
becomes of twice the size by splitting the rational functions
in~\eqref{eq:tan-space-span} into real and imaginary parts.
Then Theorem~\ref{thm:orth-interp} remains the same after replacing
$\ManRatStab$ and $\Hardy$ by $\ReManRatStab$ and $\ReHardy$, respectively.
Therefore, also Corollary~\ref{cor:orth-interp-cond} stays the same.

\subsection{Geometric Interpretation of IRKA}%
\label{sec:irka-geom}

\Ac{irka} has been initially introduced and
studied as a fixed point iteration~\cite{GugAB08}.
In this section,
our goal is to analyze the geometry of the \ac{irka} iteration for the \ac{mimo}
case using the tools from \Cref{sec:geointerpolate}.

First, Theorem~\ref{thm:orth-interp} allows us to represent each \ac{irka}
step as an orthogonal projection as we prove next.
\begin{cor}\label{cor:irkastepsorth}
  Let $H \in \Hardy$ be given.
  Furthermore, let $\hH_k, \hH_{k + 1} \in \ManRatStab$ be two consecutive
  iterates of \ac{irka} such that $\hH_k$ has pairwise distinct poles.
  Then
  \begin{equation}\label{eq:irka-orth}
    H - \hH_{k + 1}
    \perp
    \T_{\hH_k}\ManRatStab,
  \end{equation}
  i.e., $\hH_{k + 1}$ is an orthogonal projection of $H$ onto $\ManRatStab$
  along the normal space of $\ManRatStab$ at $\hH_k$.
\end{cor}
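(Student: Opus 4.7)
The plan is to specialize \Cref{thm:orth-interp} to $F = H - \hH_{k+1}$, with the tangent space taken at $\hH_k$ rather than at $\hH_{k+1}$. The key observation is simply that the three \ac{irka} equations in~\eqref{eq:irka} are, after moving $\hH_{k+1}$ to the left-hand side of each equality, exactly the interpolatory conditions~\eqref{eq:interp} of \Cref{thm:orth-interp} written out against the pole-residue data $\{\lamik, \bik, \cik\}$ of the \emph{previous} iterate $\hH_k$.

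Concretely, I would subtract $\hH_{k+1}\myparen*{-\lamikstar} \bik$, $\cikstar \hH_{k+1}\myparen*{-\lamikstar}$, and $\cikstar \hH_{k+1}'\myparen*{-\lamikstar}\bik$ from the respective lines of~\eqref{eq:irka} to obtain, for $i = 1, 2, \ldots, r$,
\begin{align*}
  (H - \hH_{k+1})\myparen*{-\lamikstar} \bik &= 0, \\
  \cikstar (H - \hH_{k+1})\myparen*{-\lamikstar} &= 0, \\
  \cikstar (H - \hH_{k+1})'\myparen*{-\lamikstar} \bik &= 0.
\end{align*}
Since $\hH_k$ has pairwise distinct poles $\lamik$ by hypothesis and $H - \hH_{k+1} \in \Hardy$, \Cref{thm:orth-interp} applied with the roles of $\hH$ and $F$ played by $\hH_k$ and $H - \hH_{k+1}$, respectively, yields~\eqref{eq:irka-orth} at once. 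The ``orthogonal projection'' interpretation is then immediate: \eqref{eq:irka-orth} says $H - \hH_{k+1} \in (\T_{\hH_k}\ManRatStab)^\perp = \N_{\hH_k}\ManRatStab$, so $\hH_{k+1}$ differs from $H$ by a vector in the normal space of $\ManRatStab$ at $\hH_k$.

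There is no real obstacle here: all the technical content has been absorbed into \Cref{lem:tan-space-span} and \Cref{thm:orth-interp}. The present corollary is essentially a relabeling exercise whose conceptual payoff is that, because \ac{irka} determines the interpolation data used to construct $\hH_{k+1}$ from the \emph{preceding} iterate $\hH_k$, the resulting error is orthogonal to the tangent space at $\hH_k$ rather than at $\hH_{k+1}$; this mismatch between the two tangent spaces is precisely what will motivate the Riemannian gradient descent reinterpretation in the subsequent subsection.
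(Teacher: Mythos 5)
Your proposal is correct and is essentially the paper's own proof: the paper likewise proves the corollary by applying \Cref{thm:orth-interp} with $F = H - \hH_{k+1}$ (and the pole-residue data of $\hH_k$), noting that~\eqref{eq:irka-orth} is then equivalent to the \ac{irka} conditions~\eqref{eq:irka}. Your write-up just spells out the relabeling and the normal-space interpretation more explicitly than the paper does.
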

\begin{proof}
  Applying Theorem~\ref{thm:orth-interp} with $F = H - \hH_{k + 1}$ shows
  that~\eqref{eq:irka-orth} is equivalent to~\eqref{eq:irka}.
\end{proof}
Thus, Corollary~\ref{cor:irkastepsorth} gives an interpretation of \ac{irka} as
an iterative orthogonal projection method.
This is illustrated in \Cref{fig:irka}.
In the next section,
this will help us in interpreting \ac{irka} as a Riemannian optimization method.

\begin{figure*}[tb]
  \centering
  \begin{subfigure}[t]{0.48\textwidth}
    \centering
    \begin{tikzpicture}[x=1em, y=1em, scale=1.2]
      \def\xmin{-10}
      \def\xmax{10}
      \def\coeff{-0.03}
      \def\Hr{-7}
      \def\H{5}

      \node at (-7, 6) {$\Hardy$};

      \coordinate (H) at (0, \H);
      \draw[fill] (H) circle (0.1) node[above] {$H$};

      \draw[domain=\xmin:\xmax, thick] plot (\x, \coeff*\x*\x);
      \node[rotate=-20] at (\xmax-3, \coeff*\xmax*\xmax+0.5) {$\ManRatStab$};

      \coordinate (Hr) at (\Hr, \coeff*\Hr*\Hr);
      \draw[fill] (Hr) circle (0.1) node[below] {$\hH_k$};

      \pgfmathsetmacro{\foo}{2*\coeff*\Hr*(\xmin - \Hr) + \coeff*\Hr*\Hr}
      \pgfmathsetmacro{\bar}{2*\coeff*\Hr*(\xmax - \Hr) + \coeff*\Hr*\Hr}
      \draw (\xmin, \foo) -- node[pos=0.9, above, sloped]
      {$\T_{\hH_k}\ManRatStab$} (\xmax, \bar);

      \pgfmathsetmacro{\newHrx}{
        (1 - sqrt(1 + 16*\coeff*\coeff*\coeff*\Hr*\Hr*\H))
        / (-4*\coeff*\coeff*\Hr)
      }
      \pgfmathsetmacro{\newHry}{\coeff*\newHrx*\newHrx}
      \coordinate (newHr) at (\newHrx, \newHry);
      \draw[fill] (newHr) circle (0.1) node[below] {$\hH_{k + 1}$};

      \draw[dashed] (H) -- (newHr);
    \end{tikzpicture}
    \caption{IRKA iteration.
      $\hH_{k + 1}$ is an orthogonal projection of $H$ along the orthogonal
      complement of $\T_{\hH_k} \ManRatStab$ onto $\ManRatStab$}%
    \label{fig:irka}
  \end{subfigure}
  \hfill
  \begin{subfigure}[t]{0.48\textwidth}
    \centering
    \begin{tikzpicture}[x=1em, y=1em, scale=1.2]
      \def\xmin{-10}
      \def\xmax{10}
      \def\coeff{-0.03}
      \def\Hr{-7}
      \def\H{5}

      \node at (-7, 6) {$\Hardy$};

      \coordinate (H) at (0, \H);
      \draw[fill] (H) circle (0.1) node[above] {$H$};

      \draw[domain=\xmin:\xmax, thick] plot (\x, \coeff*\x*\x);
      \node[rotate=-20] at (\xmax-3, \coeff*\xmax*\xmax+0.5) {$\ManRatStab$};

      \coordinate (Hr) at (\Hr, \coeff*\Hr*\Hr);
      \draw[fill] (Hr) circle (0.1) node[below] {$\hH_k$};

      \draw[-latex, ultra thick] (Hr)
      -- node[pos=0.6, above, sloped] {$-\nabla{\of}(\hH_k)$}
      (H);

      \pgfmathsetmacro{\foo}{2*\coeff*\Hr*(\xmin - \Hr) + \coeff*\Hr*\Hr}
      \pgfmathsetmacro{\bar}{2*\coeff*\Hr*(\xmax - \Hr) + \coeff*\Hr*\Hr}
      \draw (\xmin, \foo) -- node[pos=0.9, above, sloped]
      {$\T_{\hH_k}\ManRatStab$} (\xmax, \bar);

      \pgfmathsetmacro{\gfx}{
        2*\coeff*\Hr*(\H + \coeff*\Hr*\Hr)/(4*\coeff*\coeff*\Hr*\Hr + 1)
      }
      \pgfmathsetmacro{\gfy}{
        \coeff*\Hr*\Hr*(4*\coeff*\H - 1)/(4*\coeff*\coeff*\Hr*\Hr + 1)
      }
      \coordinate (gf) at (\gfx, \gfy);
      \draw[-latex, ultra thick] (Hr)
      -- node[pos=0.7, above, sloped] {$-\grad{f}(\hH_k)$}
      (gf);

      \pgfmathsetmacro{\newHrx}{
        (1 - sqrt(1 + 16*\coeff*\coeff*\coeff*\Hr*\Hr*\H))
        / (-4*\coeff*\coeff*\Hr)
      }
      \pgfmathsetmacro{\newHry}{\coeff*\newHrx*\newHrx}
      \coordinate (newHr) at (\newHrx, \newHry);
      \draw[fill] (newHr) circle (0.1) node[below] {$\hH_{k + 1}$};

      \draw[dashed] (H) -- (newHr);

      \node at (\newHrx + 12.5ex, \newHry + 1.5ex)
      {$\R_{\hH_k}\myparen*{-\grad{f}\myparen{\hH_k}}$};
    \end{tikzpicture}
    \caption{Riemannian gradient descent iteration
      for~\eqref{eq:h2-opt-prob-riemann}.
      $\nabla{\of}(\hH_k)$ is the Euclidean gradient,
      $\grad{f}(\hH_k)$ is the Riemannian gradient, and
      $\R_{\hH_k}$ is the orthographic retraction}%
    \label{fig:irka_riemann}
  \end{subfigure}
  \caption{Two methods for $\Htwo$-optimal model order reduction.
    $H$ is the full-order transfer function,
    $\ManRatStab$ is the manifold of stable rational functions of degree $r$,
    $\hH_k$ is the current reduced-order transfer function,
    $\T_{\hH_k} \ManRatStab$ is the tangent space of $\ManRatStab$ at $\hH_k$,
    and $\hH_{k + 1}$ is the next iterate}%
\end{figure*}
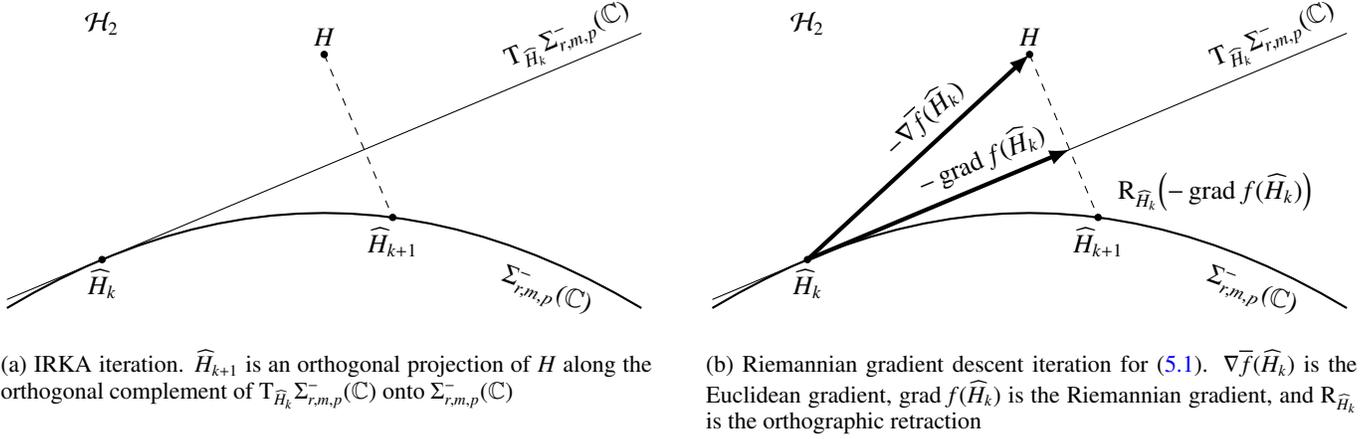

\section{IRKA is a Riemannian Optimization Method}%
\label{sec:irka-riemann}

So far, we have interpreted $\Htwo$-optimal \ac{mor} as
a Riemannian optimization problem and
gave a geometric interpretation of \ac{irka} based on
the embedded manifold structure of $\ManRatStab$.
With this, it becomes clear that \ac{irka} can be interpreted as
a Riemannian gradient descent method applied to~\eqref{eq:h2-opt-prob-riemann}
for a particular choice of step size and retraction.

Indeed, \Cref{fig:irka_riemann} suggests that \ac{irka} is a
Riemannian gradient descent method~\eqref{eq:rgd} with $\alpha_k = 1$ and
the orthographic retraction, i.e.,
a projection to the manifold that is orthogonal to the tangent space.
In \Cref{sec:retraction},
we extend a previous result on orthographic retractions from finite-dimensional
Euclidean spaces to infinite-dimensional spaces.
Then using this result in \Cref{sec:irka-rgd},
we show that \ac{irka} does behave as a Riemannian gradient descent method when
well-defined.

\subsection{Orthographic Retraction}%
\label{sec:retraction}

The orthographic retraction $\R$ is such that
\begin{equation*}
  \hH + G - \R_{\hH}(G) \perp \T_{\hH}\ManRatStab,
\end{equation*}
for all $\hH \in \ManRatStab$ and
all $G$ in some open subset of $\T_{\hH}\ManRatStab$ containing zero.
There is work on projection-like retractions,
including orthographic retractions,
in~\cite[Lemma~20]{AbsM12} for embedded submanifolds of finite-dimensional
Euclidean spaces.
The following lemma generalizes it to infinite-dimensional spaces.
\begin{lem}
  Let $\cM$ be an embedded submanifold of a Hilbert space $\bbE$.
  For all $\hp \in \cM$,
  there exists a neighborhood $\cU_{\T\cM}$ of $(\hp, 0)$ in $\T\cM$ such that
  for all $(p, v) \in \cU_{\T\cM}$,
  there is one and only one smallest $w \in \N_p\cM$ such that
  $p + v + w \in \cM$.
  Call it $w(p, v)$ and define $\R(p, v) = p + v + w(p, v)$.
  We have $\D_v w(p, 0) = 0$ and thus $\R$ defines a retraction around $\hp$.
  Since the expression of $\R$ does not depend on $\hp$ or $\cU_{\T\cM}$,
  $\R$ defines a retraction on $\cM$.
\end{lem}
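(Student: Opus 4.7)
The plan is to extend the finite-dimensional argument of \cite[Lemma~20]{AbsM12} to the infinite-dimensional setting by means of the Banach-space implicit function theorem, using a local graph representation of $\cM$ near $\hp$. Fix $\hp \in \cM$ and set $\T = \T_{\hp}\cM$, $\N = \N_{\hp}\cM$. Since $\cM$ is embedded in $\bbE$, the tangent space $\T$ is closed, and the Hilbert structure of $\bbE$ yields the orthogonal splitting $\bbE = \T \oplus \N$. Composing a local chart of $\cM$ around $\hp$ with the orthogonal projection onto $\T$ and applying the inverse function theorem, one obtains a smooth graph map $g\colon \cU_{\T} \to \N$, defined on a neighborhood $\cU_{\T}$ of $0$ in $\T$, such that near $\hp$ the manifold $\cM$ coincides with $\{(u, g(u)) : u \in \cU_{\T}\}$, with $g(0) = 0$ and $Dg(0) = 0$.

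In this parametrization, any $p \in \cM$ near $\hp$ has the form $p = (u_0, g(u_0))$, its tangent space is $\T_p\cM = \{(\eta, Dg(u_0)[\eta]) : \eta \in \T\}$, and any $v \in \T_p\cM$ can be written uniquely as $v = (\xi, Dg(u_0)[\xi])$ with $\xi \in \T$. Asking for $p + v + w \in \cM$ with $w \in \N_p\cM$ is then equivalent to finding $u' \in \cU_{\T}$ such that the candidate
\begin{equation*}
  w = \bigl(u' - u_0 - \xi,\; g(u') - g(u_0) - Dg(u_0)[\xi]\bigr)
\end{equation*}
is orthogonal to every $(\eta, Dg(u_0)[\eta])$ in $\T_p\cM$. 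Taking inner products and using the Hilbert adjoint $[Dg(u_0)]^{*}$ of $Dg(u_0)$, this reduces to the single $\T$-valued equation
\begin{equation*}
  F(u_0, \xi, u') := (u' - u_0 - \xi) + [Dg(u_0)]^{*}\bigl(g(u') - g(u_0) - Dg(u_0)[\xi]\bigr) = 0.
\end{equation*}

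At $(u_0, \xi, u') = (0, 0, 0)$ one has $F = 0$, and because $Dg(0) = 0$ the partial derivative $\partial_{u'} F(0, 0, 0)$ reduces to the identity on $\T$, which is a bounded linear isomorphism. The Banach-space implicit function theorem therefore produces a unique smooth solution $u' = u'(u_0, \xi)$ on a neighborhood of $(0, 0)$, hence a unique smooth $w(p, v) \in \N_p\cM$, and we set $\R(p, v) := p + v + w(p, v) \in \cM$. Shrinking the neighborhood if necessary, $w(p,v)$ is the unique element of $\N_p\cM$ near the origin with $p + v + w \in \cM$, hence in particular the smallest such $w$ in norm. Differentiating $F(0, \xi, u'(0, \xi)) = 0$ at $\xi = 0$ and again using $Dg(0) = 0$ yields $D_\xi u'(0, 0) = \operatorname{id}_{\T}$, from which $\R(\hp, 0) = \hp$ and $D_v \R(\hp, 0) = \operatorname{id}_{\T_{\hp}\cM}$ follow at once.

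The main subtlety, and where the proof genuinely departs from the finite-dimensional version, is that $\T$ and $\N$ may be infinite-dimensional, so any rank or compactness argument is unavailable; this obstacle is bypassed precisely because $\partial_{u'} F(0,0,0)$ is the identity and hence invertible, allowing the Banach-space implicit function theorem to be invoked directly. Finally, the characterization of $w(p,v)$ as the unique smallest element of $\N_p\cM$ with $p + v + w \in \cM$ is intrinsic to $(p,v)$ and references neither the base point $\hp$ nor the chart used, so the locally defined retractions agree on overlaps and glue into a single retraction $\R$ defined on an open neighborhood of the zero section of $\T\cM$.
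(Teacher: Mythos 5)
Your proof is correct and follows essentially the same route as the paper: the paper simply asserts that the finite-dimensional argument of Absil–Malick's Lemma~20 carries over verbatim once $\bbR^d$ and $\bbR^{n-d}$ are replaced by the factors of a submanifold-adapted chart of $\bbE$, and your graph-over-$\T_{\hp}\cM$ parametrization together with the Banach-space implicit function theorem is precisely that adaptation, worked out explicitly. The only point to note is that you verify local rigidity only at the center point $\hp$, but since $\hp \in \cM$ is arbitrary and your final paragraph shows the construction is intrinsic (so the locally defined maps glue), rigidity at every base point $p$ follows by re-centering the argument at $p$, exactly as in the cited proof.
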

\begin{proof}
  The proof of~\cite[Lemma~20]{AbsM12} works exactly the same in this setting,
  except that $\bbR^d$ and $\bbR^{n - d}$
  need to be replaced by $\bbE_1$ and $\bbE_2$, respectively,
  where $\bbE \simeq \bbE_1 \times \bbE_2$ such that for any $\hp \in \cM$ there
  exists a chart $\fundef{\varphi}{U}{\bbE_1 \times \bbE_2}$ of $\bbE$
  around $\hp$ such that
  $\varphi(p) \in \bbE_1 \times \{0\}$ if and only if $p \in \cM \cap U$
  (see~\cite[Theorem~1.3.5]{Kli11}).
\end{proof}

\subsection{IRKA as Riemannian Gradient Descent}%
\label{sec:irka-rgd}

We can now show that \ac{irka} is a Riemannian gradient descent method,
assuming \ac{irka} iterates are well-defined.
\begin{thm}\label{thm:irka-rgd}
  Let $\hH_k, \hH_{k + 1} \in \ManRatStab$ be two consecutive iterates of
  \ac{irka},
  $f$ as in~\eqref{eq:h2-opt-prob-riemann}, and
  $\fundef{\R}{\T\ManRatStab}{\ManRatStab}$ the orthographic retraction.
  Furthermore, let $\hH_{k + 1}$ be the unique element of $\ManRatStab$
  satisfying~\eqref{eq:irka-orth} and
  let $\R_{\hH_k}\myparen{-\grad f(\hH_k)}$ exist.
  Then
  \begin{equation}\label{eq:irkastep}
    \hH_{k + 1} = \R_{\hH_k}\myparen*{-\grad f(\hH_k)}.
  \end{equation}
\end{thm}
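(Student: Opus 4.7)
The plan is to show that the right-hand side of~\eqref{eq:irkastep} satisfies the orthogonality condition~\eqref{eq:irka-orth} that, by hypothesis, uniquely determines $\hH_{k+1}$.

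First I would unpack the two ingredients of the right-hand side. From~\eqref{eq:riemann-grad-h2} we have $\grad f(\hH_k) = \Proj_{\hH_k}(\hH_k - H)$, so by definition of the orthogonal projector onto $\T_{\hH_k}\ManRatStab$, the residual $(\hH_k - H) - \grad f(\hH_k)$ lies in the normal space $\N_{\hH_k}\ManRatStab$. Rearranging, $\hH_k - \grad f(\hH_k) = H + n$ for some $n \in \N_{\hH_k}\ManRatStab$. Next, by the defining property of the orthographic retraction established in \Cref{sec:retraction}, the point $\R_{\hH_k}(-\grad f(\hH_k))$ has the form $\hH_k - \grad f(\hH_k) + w$ with $w \in \N_{\hH_k}\ManRatStab$ chosen so that the sum lies on $\ManRatStab$.

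Combining the two displays gives
\begin{equation*}
  \R_{\hH_k}\myparen*{-\grad f(\hH_k)} = H + (n + w),
\end{equation*}
and $n + w \in \N_{\hH_k}\ManRatStab$ since the normal space is a linear subspace of $\Hardy$. Consequently
\begin{equation*}
  H - \R_{\hH_k}\myparen*{-\grad f(\hH_k)} \in \N_{\hH_k}\ManRatStab,
\end{equation*}
i.e., $\R_{\hH_k}(-\grad f(\hH_k))$ satisfies the orthogonality relation $H - (\cdot) \perp \T_{\hH_k}\ManRatStab$ that characterizes $\hH_{k+1}$ by~\eqref{eq:irka-orth}. By the assumed uniqueness of $\hH_{k+1}$ as the element of $\ManRatStab$ satisfying this relation, the two iterates coincide, proving~\eqref{eq:irkastep}.

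I do not anticipate a genuine obstacle here, since the algebraic manipulation is short; the substance of the theorem is already carried by the earlier results. The only delicate point is matching the uniqueness clauses: the orthographic retraction is only defined when the appropriate normal correction is the \emph{smallest} one, while \ac{irka} is only well-defined when its interpolatory step yields a unique minimal realization of McMillan degree $r$. The hypotheses of the theorem explicitly assume both, so the two candidates live in regimes where each is singled out; the argument above then identifies them with each other.
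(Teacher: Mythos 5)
Your argument is correct and is essentially the paper's own proof: the paper likewise combines the orthographic-retraction property ($\hH_k - \grad f(\hH_k) - \R_{\hH_k}(-\grad f(\hH_k)) \perp \T_{\hH_k}\ManRatStab$) with the projection characterization of the gradient from~\eqref{eq:riemann-grad-h2}, adds the two orthogonality relations, and invokes the assumed uniqueness of the element satisfying~\eqref{eq:irka-orth}. Your phrasing via normal-space decompositions $H+n$ and $+w$ is just a rewording of the same two facts, so there is nothing substantive to change.
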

\begin{proof}
  Since $\R$ is the orthographic retraction, we have
  \begin{equation*}
    \hH_k
    - \grad f(\hH_k)
    - \R_{\hH_k}\myparen*{-\grad f(\hH_k)}
    \perp
    \T_{\hH_k}\ManRatStab.
  \end{equation*}
  Next, the relation~\eqref{eq:riemann-grad-h2} can be written as
  \begin{equation*}
    \grad f(\hH_k)
    + H - \hH_k
    \perp
    \T_{\hH_k}\ManRatStab.
  \end{equation*}
  Adding the above two relations, we find
  \begin{equation*}
    H
    - \R_{\hH_k}\myparen*{-\grad f(\hH_k)}
    \perp
    \T_{\hH_k}\ManRatStab.
  \end{equation*}
  Since $\hH_{k + 1}$ satisfying~\eqref{eq:irka-orth} was assumed to be unique,
  it follows that $\hH_{k + 1}$ and $\R_{\hH_k}\myparen{-\grad f(\hH_k)}$
  are equal.
\end{proof}
This shows that \ac{irka} is a Riemannian gradient descent method with a fixed
step size.
The qualifier ``well-defined'' is needed as some iterates of \ac{irka} may not
lie on the manifold $\ManRatStab$ (they may be unstable or of lower order)
or there may not be a unique interpolant~\eqref{eq:irka}.

\section{Algorithmic Developments}%
\label{sec:algorithm}

Now that we have established the understanding of \ac{irka} as
a Riemannian gradient descent method with a fixed step size,
we are in a position to improve its performance and develop new algorithms
by considering various Riemannian optimization techniques.

The first such approach would be to implement a variation of \ac{irka} using
Riemannian gradient descent with variable step size.
In particular, this is relevant because of two issues:
\begin{enumerate}
  \item the orthographic retraction may only be defined locally,
  \item $\ReManRatStab$ is disconnected for $m = p = 1$.
\end{enumerate}
Regarding the first point,
it is known that \ac{irka} may generate intermediate iterates with
unstable poles.
For the second point,
we find that \ac{irka} can jump between different connected components,
but the theory of Riemannian optimization assumes that the manifold is
connected.
We resolve both of these issues using backtracking line search,
as discussed in this section.

We discuss connectedness and how to detect connected components in
\Cref{sec:connectedness}.
We emphasize that connectedness is a potential concern only in the \ac{siso}
case.
In \Cref{sec:step-size},
we employ backtracking line search to enforce stability,
a decrease in $\Htwo$ error, and
that the iterates remain on the same connected component.

\subsection{Connectedness}%
\label{sec:connectedness}

As mentioned in~\Cref{sec:manifold},
$\ReManRatStab$ has $r + 1$ connected components when $m = p = 1$.
Furthermore, the connected components can be identified by the Cauchy index.

For a real proper rational function with partial fraction expansion
\begin{equation*}
  F(s) =
  \sum_{i = 1}^k
  \sum_{j = 1}^{\mu_i}
  \frac{\varphi_i^{(j)}}{{(s - \lambda_i)}^j}
  + G(s),
\end{equation*}
where $\lambda_1, \ldots, \lambda_k$ are all real distinct poles of $F$
($G$ has no real poles),
the Cauchy index of $F$, colloquially speaking,
is the number of real poles at which the function jumps from $-\infty$ to
$\infty$ minus the number of real poles with jumps from $\infty$ to $-\infty$.
Formally, the Cauchy index of $F$ is defined as
\begin{equation*}
  \cauchy(F) =
  \sum_{i = 1}^k
  \sum_{\substack{j = 1 \\ j \text{ odd}}}^{\mu_i}
  \sign*{\varphi_i^{(j)}}.
\end{equation*}
Assuming moderate $r$,
one can compute the pole-residue form of $\hH_k$ in a numerically efficient way
and compute $\cauchy(\hH_k)$ at every iteration in \ac{irka}.
Doing precisely that,
we show via various numerical experiments in~\Cref{sec:numerics} that,
for \ac{siso} systems,
during the \ac{irka} iterations,
the Cauchy index can change.
Using backtracking line search as we explain in the next sections,
we ensure that the Cauchy index remains the same throughout the iteration for
\ac{siso} systems.
We re-emphasize that connectedness is not a concern when
$m > 1$ and/or $p > 1$.

\subsection{Line Search}%
\label{sec:step-size}

Here we derive a Riemannian gradient descent method with variable step size and
propose a backtracking line search for $\Htwo$-optimal \ac{mor}.
This will resolve both potential issues listed at the beginning of this section.

Recall that Theorem~\ref{thm:irka-rgd} states that \ac{irka} is a Riemannian
gradient descent method with a fixed step size $\alpha_k = 1$, i.e.,
$\hH_{k + 1} = \R_{\hH_k}\myparen{-\grad f(\hH_k)}$
as shown in~\eqref{eq:irkastep}
where $\R$ denotes the orthographic retraction.
Here, exploiting this new Riemannian optimization perspective of \ac{irka},
we develop a new formulation where the classical \ac{irka} step is replaced with
$\hH_{k + 1} = \R_{\hH_k}\myparen{-\alpha_k \grad f(\hH_k)}$
for a positive step size $\alpha_k$.
With an appropriate choice of $\alpha_k$ with backtracking,
this modification guarantees stability of $\hH_{k}$ at every step and
also makes sure that the new modified Riemannian-based \ac{irka} iterate
do not jump between different connected components.

Two issues remain.
Recall that in the classical \ac{irka} case
($\hH_{k + 1} = \R_{\hH_k}\myparen{-\grad f(\hH_k)}$),
every iterate is obtained via bitangential Hermite interpolation.
How can we compute the iterates with a variable step size?
First, we answer this question and
show that the new updates can still be achieved using interpolation.
Also recall that at every \ac{irka} step,
one performs a Petrov-Galerkin projection as in~\eqref{eq:pg}.
However, a line search with backtracking will require re-computing $\hH_{k + 1}$
for all the $\alpha_k$'s tested.
A naive implementation of this will then require repeated Petrov-Galerkin
projections during line search,
which is computationally very expensive for the large-scale problems of interest
to \ac{mor}.
Thus, as a second contribution in this section,
we develop an efficient implementation for constructing $\hH_{k + 1}$
that avoids repeated Petrov-Galerkin projection during line search.

The next result shows that $\hH_{k + 1}$ obtained via
a step size $\alpha_k \neq 1$ is still an interpolant,
more precisely it is a bitangential Hermite interpolant of
$H_{k, \alpha_k} = (1 - \alpha_k) \hH_k + \alpha_k H$
(\Cref{fig:irka_riemann} helps to show why this is the case).
\begin{thm}\label{thm:step-orth}
  Let
  $H \in \Hardy$,
  $\hH_k \in \ManRatStab$,
  $\alpha_k > 0$.
  Let
  \[\hH_{k + 1} = \R_{\hH_k}\myparen*{-\alpha_k \grad f(\hH_k)}\]
  be obtained via Riemannian gradient descent with step size $\alpha_k$.
  Define
  \[H_{k, \alpha_k} = (1 - \alpha_k) \hH_k + \alpha_k H.\]
  Then \[H_{k, \alpha_k} - \hH_{k + 1} \perp \T_{\hH_k}\ManRatStab,\]
  thus $\hH_{k + 1}$ is a bitangential Hermite interpolant to $H_{k, \alpha_k}$.
\end{thm}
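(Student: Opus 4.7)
The plan is to proceed in close analogy with the proof of \Cref{thm:irka-rgd}, combining two orthogonality relations and then invoking \Cref{thm:orth-interp}. The two building blocks are the defining property of the orthographic retraction and the expression \eqref{eq:riemann-grad-h2} for the Riemannian gradient.

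First, I would record the orthographic retraction identity at the tangent vector $-\alpha_k \grad f(\hH_k) \in \T_{\hH_k}\ManRatStab$: by the definition of $\R$ in \Cref{sec:retraction},
\[
  \hH_k - \alpha_k \grad f(\hH_k) - \R_{\hH_k}\myparen*{-\alpha_k \grad f(\hH_k)} \perp \T_{\hH_k}\ManRatStab,
\]
which, using the definition of $\hH_{k+1}$, becomes
\[
  \hH_k - \alpha_k \grad f(\hH_k) - \hH_{k+1} \perp \T_{\hH_k}\ManRatStab.
\]
Second, from \eqref{eq:riemann-grad-h2}, since $\grad f(\hH_k)$ is the orthogonal projection of $\hH_k - H$ onto $\T_{\hH_k}\ManRatStab$, the residual $\grad f(\hH_k) - (\hH_k - H)$ lies in the normal space, i.e.,
\[
  \grad f(\hH_k) + H - \hH_k \perp \T_{\hH_k}\ManRatStab.
\]

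Next I would scale the second relation by $\alpha_k > 0$ (which preserves orthogonality to a subspace) and add it to the first. The $\alpha_k \grad f(\hH_k)$ terms cancel, and collecting what remains yields
\[
  (1 - \alpha_k)\hH_k + \alpha_k H - \hH_{k+1} \perp \T_{\hH_k}\ManRatStab,
\]
which is exactly $H_{k,\alpha_k} - \hH_{k+1} \perp \T_{\hH_k}\ManRatStab$, the desired orthogonality.

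Finally, to obtain the bitangential Hermite interpolation conclusion, I would apply \Cref{thm:orth-interp} with $F = H_{k,\alpha_k} - \hH_{k+1}$ and with the pole--residue data of $\hH_k$ (assuming pairwise distinct poles of $\hH_k$, as in the companion results). This immediately translates the tangent-space orthogonality into the interpolation conditions at $-\overline{\lambda_i^{(k)}}$ along the tangent directions $b_i^{(k)}$, $c_i^{(k)}$, identifying $\hH_{k+1}$ as a bitangential Hermite interpolant of $H_{k,\alpha_k}$ at the reflected poles of $\hH_k$. There is no real obstacle here: the proof is essentially a short linear combination of two normal-space conditions, and the only subtlety is the bookkeeping required to turn the gradient-projection identity into a usable orthogonality, which is already handled cleanly by \eqref{eq:riemann-grad-h2}.
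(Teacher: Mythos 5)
Your proposal is correct and follows essentially the same route as the paper: combining the orthographic-retraction normality condition with the projection-residual property of $\grad f(\hH_k)$ from~\eqref{eq:riemann-grad-h2} (the paper phrases the latter as $\Proj_{\hH_k}(H_{k,\alpha_k}-\hH_k) = -\alpha_k \grad f(\hH_k)$, which is just your relation scaled by $\alpha_k$), then summing the two normal-space relations. The concluding appeal to \Cref{thm:orth-interp} for the interpolation interpretation matches the paper as well.
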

\begin{proof}
  From the property of $\Proj_{\hH_k}$, we have that
  \begin{equation*}
    H_{k, \alpha_k}
    - \hH_k
    - \Proj_{\hH_k}\myparen*{H_{k, \alpha_k} - \hH_k}
    \perp \T_{\hH_k}\ManRatStab.
  \end{equation*}
  Using that $\R$ is the orthographic retraction, we have
  \begin{equation*}
    \hH_k
    - \alpha_k \grad{f}(\hH_k)
    - \hH_{k + 1}
    \perp \T_{\hH_k}\ManRatStab.
  \end{equation*}
  Next, we observe that
  \begin{align*}
     & \Proj_{\hH_k}\myparen*{H_{k, \alpha_k} - \hH_k}
    = \Proj_{\hH_k}\myparen*{\alpha_k \myparen*{H - \hH_k}} \\
     & = \alpha_k \Proj_{\hH_k}\myparen*{H - \hH_k}
    = -\alpha_k \grad{f}(\hH_k).
  \end{align*}
  Therefore,
  \begin{align*}
    H_{k, \alpha_k} - \hH_{k + 1}
     & =
    \myparen*{
      H_{k, \alpha_k}
      - \hH_k
      - \Proj_{\hH_k}\myparen*{H_{k, \alpha_k} - \hH_k}
    }         \\*
     & \qquad
    +
    \myparen*{
      \hH_k
      - \alpha_k \grad{f}(\hH_k)
      - \hH_{k + 1}
    }
  \end{align*}
  is orthogonal to $\T_{\hH_k}\ManRatStab$.
\end{proof}
This result shows that $\hH_{k + 1}$ is a bitangential Hermite interpolant to
$H_{k, \alpha_k}$ and
thus can be constructed, e.g.,
using Petrov-Galerkin projection as discussed in~\Cref{sec:bhi}
but now applied to the affine combination
$H_{k, \alpha_k} = (1 - \alpha_k) \hH_k + \alpha_k H$
(as opposed to $H$ as done in classical \ac{irka}).
A naive implementation of line search would then,
for every new $\alpha_k$,
build the state-space form of $H_{k, \alpha_k}$,
compute the projection matrices by solving $2 r$ linear systems
in~\eqref{eq:spanVW},
and then project to find the new \ac{rom}, which would be costly.
The following theorem show how to avoid these repeated projections
during line search.
\begin{thm}
  Let $H(s) = C {(s E - A)}^{-1} B$ and
  $\hH_k(s) = \hC_k {(s \hE_k - \hA_k)}^{-1} \hB_k$
  be real systems.
  Then a state-space realization of
  $\hH_{k + 1} = \R_{\hH_k}\myparen*{-\alpha_k \grad f(\hH_k)}$ is
  \begin{subequations}\label{eq:step-proj}
    \begin{align}
      \hE_{k + 1}
       & =
      \hE_k
      - \alpha_k
      \myparen*{
        \hE_k
        - \hQ_k^{-1} \tQ_k\tran E \tP_k \hP_k^{-1}
      },   \\*
      \hA_{k + 1}
       & =
      \hA_k
      - \alpha_k
      \myparen*{
        \hA_k
        - \hQ_k^{-1} \tQ_k\tran A \tP_k \hP_k^{-1}
      },   \\*
      \hB_{k + 1}
       & =
      \hB_k
      - \alpha_k
      \myparen*{
        \hB_k
        - \hQ_k^{-1} \tQ_k\tran B
      },   \\*
      \hC_{k + 1}
       & =
      \hC_k
      - \alpha_k
      \myparen*{
        \hC_k
        -  C \tP_k \hP_k^{-1}
      },
    \end{align}
  \end{subequations}
  for any $\alpha_k > 0$
  where $\tP_k$, $\hP_k$, $\tQ_k$, $\hQ_k$ are solutions to Sylvester and
  Lyapunov equations
  \begin{subequations}\label{eq:gramians}
    \begin{align}
      A \tP_k \hE_k\tran
      + E \tP_k \hA_k\tran
      + B \hB_k\tran
       & = 0, \\
      \hA_k \hP_k \hE_k\tran
      + \hE_k \hP_k \hA_k\tran
      + \hB_k \hB_k\tran
       & = 0, \\
      A\tran \tQ_k \hE_k
      + E\tran \tQ_k \hA_k
      + C\tran \hC_k
       & = 0, \\
      \hA_k\tran \hQ_k \hE_k
      + \hE_k\tran \hQ_k \hA_k
      + \hC_k\tran \hC_k
       & = 0.
    \end{align}
  \end{subequations}
\end{thm}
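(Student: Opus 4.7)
The plan is to reduce the claim to the construction of a bitangential Hermite interpolant and then carefully track the Sylvester equations and bases through the Petrov-Galerkin framework of Section~\ref{sec:bhi}. By Theorem~\ref{thm:step-orth}, the iterate $\hH_{k+1}$ is the (assumed unique) bitangential Hermite interpolant of $H_{k,\alpha_k} = (1-\alpha_k)\hH_k + \alpha_k H$ at the reflected poles of $\hH_k$ with tangent directions taken from its residues. Consequently, any explicit state-space realization of such an interpolant must agree with $\hH_{k+1}$; it therefore suffices to verify that~\eqref{eq:step-proj} yields one.

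First, I would assemble the block-diagonal state-space realization of $H_{k,\alpha_k}$,
\begin{equation*}
  E_* = \begin{pmatrix} E & 0 \\ 0 & \hE_k \end{pmatrix},\
  A_* = \begin{pmatrix} A & 0 \\ 0 & \hA_k \end{pmatrix},\
  B_* = \begin{pmatrix} \alpha_k B \\ (1-\alpha_k)\hB_k \end{pmatrix},
\end{equation*}
together with $C_* = \begin{pmatrix} C & \hC_k \end{pmatrix}$. Since the interpolation points are the eigenvalues of $\hE_k^{-1}\hA_k$ and the tangent directions are encoded by $\hB_k$, $\hC_k$, applying the Sylvester-equation prescription~\eqref{eq:V-sylv} to $(E_*, A_*, B_*, C_*)$ with data $(\hE_k, \hA_k, \hB_k, \hC_k)$ decouples along the block partition of $V_*$ and $W_*$. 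The full-order blocks give precisely the Sylvester equations for $\tP_k$ and $\tQ_k$ in~\eqref{eq:gramians}, yielding block components $\alpha_k \tP_k$ and $\tQ_k$, while the reduced-order blocks give the Lyapunov equations for $\hP_k$ and $\hQ_k$, yielding $(1-\alpha_k)\hP_k$ and $\hQ_k$.

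Next, I would form the Petrov-Galerkin reduction with these bases and then rescale them on the right by $\hP_k^{-1}$ and $\hQ_k^{-1}$, which is equivalent to a state-space change of coordinates on the projected model. A key simplification is that taking the transpose of each Lyapunov equation in~\eqref{eq:gramians} reproduces the same equation, so unique solvability forces $\hP_k = \hP_k\tran$ and $\hQ_k = \hQ_k\tran$. With this symmetry, the ``self'' contribution of the projection collapses to exactly $\hE_k$, $\hA_k$, $\hB_k$, $\hC_k$, while the ``cross'' contribution becomes $\hQ_k^{-1}\tQ_k\tran E \tP_k \hP_k^{-1}$ and analogues for $A$, $B$, $C$. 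Combined with the $\alpha_k$ and $(1-\alpha_k)$ weights arising from the block decomposition, the two contributions assemble into precisely the convex-combination form stated in~\eqref{eq:step-proj}.

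The main obstacle is technical rather than conceptual: verifying unique solvability of the Sylvester equations in~\eqref{eq:gramians} (which follows from Hurwitz-ness of $E^{-1}A$ and $\hE_k^{-1}\hA_k$, ensuring the relevant spectra are disjoint), invertibility of $\hP_k$ and $\hQ_k$ (which follows from minimality of $(\hA_k, \hB_k, \hC_k)$ via the standard controllability/observability Gramian interpretation so that the rescaling by $\hP_k^{-1}$ and $\hQ_k^{-1}$ is admissible), and nonsingularity of the resulting $\hE_{k+1}$ so that~\eqref{eq:step-proj} is an honest realization of an element of $\ManRatStab$---the last point being underwritten by the assumed well-definedness of the retraction along the lines of Theorem~\ref{thm:irka-rgd}.
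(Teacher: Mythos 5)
Your proposal is correct and takes essentially the same route as the paper's proof: build a block state-space realization of $H_{k, \alpha_k}$, identify the Petrov--Galerkin projection matrices blockwise as stacked solutions $\tP_k, \hP_k, \tQ_k, \hQ_k$ of~\eqref{eq:gramians}, and normalize by $\hP_k^{-1}$ and $\hQ_k^{-1}$ (using symmetry of the reduced Gramians) to arrive at~\eqref{eq:step-proj}. The only difference is cosmetic: you place the weights $\alpha_k$ and $1 - \alpha_k$ entirely in $B_*$, so they show up in the $V$-blocks, whereas the paper distributes them over all four block matrices, which is why it first treats $\alpha_k \neq 1$ and then notes the final formulas also hold at $\alpha_k = 1$.
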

\begin{proof}
  A state-space realization of $H_{k, \alpha_k}$ for $\alpha_k \neq 1$ is
  \begin{subequations}\label{eq:affine-sys}
    \begin{align}
      E_{k, \alpha_k}
       & =
      \begin{bmatrix}
        \alpha_k E & 0                    \\
        0          & (1 - \alpha_k) \hE_k
      \end{bmatrix}\!, \\*
      A_{k, \alpha_k}
       & =
      \begin{bmatrix}
        \alpha_k A & 0                    \\
        0          & (1 - \alpha_k) \hA_k
      \end{bmatrix}\!, \\*
      B_{k, \alpha_k}
       & =
      \begin{bmatrix}
        \alpha_k B \\
        (1 - \alpha_k) \hB_k
      \end{bmatrix}\!,               \\*
      C_{k, \alpha_k}
       & =
      \begin{bmatrix}
        \alpha_k C & (1 - \alpha_k) \hC_k
      \end{bmatrix}\!.
    \end{align}
  \end{subequations}
  Then $\hH_{k + 1}$ can be realized by
  \begin{equation}\label{eq:step-proj-VW}
    \begin{alignedat}{4}
      \hE_{k + 1} & = W\tran E_{k, \alpha_k} V, & \quad
      \hA_{k + 1} & = W\tran A_{k, \alpha_k} V, \\*
      \hB_{k + 1} & = W\tran B_{k, \alpha_k}, & \quad
      \hC_{k + 1} & = C_{k, \alpha_k} V,
    \end{alignedat}
  \end{equation}
  where $V$ and $W$ are solutions to Sylvester equations
  (see \Cref{sec:bhi})
  \begin{subequations}\label{eq:forVandW}
    \begin{align}
      A_{k, \alpha_k} V \hE_k\tran
      + E_{k, \alpha_k} V \hA_k\tran
      + B_{k, \alpha_k} \hB_k\tran
       & = 0, \\
      A_{k, \alpha_k}\tran W \hE_k
      + E_{k, \alpha_k}\tran W \hA_k
      + C_{k, \alpha_k}\tran \hC_k
       & = 0.
    \end{align}
  \end{subequations}
  Exploiting the block structure~\eqref{eq:affine-sys}
  of the coefficient matrices in~\eqref{eq:forVandW} and
  comparing it to~\eqref{eq:gramians},
  we find that
  \(V =
  \begin{bsmallmatrix}
    \tP_k \\
    \hP_k
  \end{bsmallmatrix}
  \)
  and
  \(W =
  \begin{bsmallmatrix}
    \tQ_k \\
    \hQ_k
  \end{bsmallmatrix}.
  \)
  Therefore, the expressions in~\eqref{eq:step-proj-VW} simplify to
  \begin{align*}
    \hE_{k + 1}
     & =
    \alpha_k \tQ_k\tran E \tP_k
    + (1 - \alpha_k) \hQ_k \hE_k \hP_k, \\*
    \hA_{k + 1}
     & =
    \alpha_k \tQ_k\tran A \tP_k
    + (1 - \alpha_k) \hQ_k \hA_k \hP_k, \\*
    \hB_{k + 1}
     & =
    \alpha_k \tQ_k\tran B
    + (1 - \alpha_k) \hQ_k \hB_k,       \\*
    \hC_{k + 1}
     & =
    \alpha_k C \tP_k
    + (1 - \alpha_k) \hC_k \hP_k,
  \end{align*}
  which also holds for $\alpha_k = 1$.
  Premultiplying by $\hQ_k^{-1}$ and postmultiplying by $\hP_k^{-1}$
  gives the equivalent realization of $\hH_{k + 1}$ in~\eqref{eq:step-proj}.
\end{proof}

We now have all the ingredients to design an effective numerical algorithm for
a variation of \ac{irka} using Riemannian gradient descent with backtracking.
A pseudo-code is presented in \Cref{alg:irka2}.
We initialize the method with
a stable \ac{rom} $\hH_1$ and
construct a new \ac{rom} with $\alpha_k = 1$, i.e.,
a step of \ac{irka}.
Then, we do backtracking and
reduce the step size by two
if $\hH_{k + 1}$ is unstable or
the $\Htwo$ has increased, or
the Cauchy index has changed
(in the \ac{siso} case only).
The algorithm terminates
if the step size becomes smaller than
a specified minimum step size $\alpha_{\min} > 0$ or
if a desired tolerance is reached.
Thus, \Cref{alg:irka2} guarantees stability throughout,
the $\Htwo$ error decreases at every step, and
upon convergence (interpolatory) $\Htwo$-optimality conditions are satisfied.

\begin{algorithm}
  \caption{\acf{irka2}}%
  \label{alg:irka2}
  \begin{algorithmic}[1]
    \Require%
    \Ac{fom} $(E, A, B, C)$,
    initial \ac{rom} $(\hE_1, \hA_1, \hB_1, \hC_1)$,
    maximum iteration number $\mathtt{maxit}$,
    minimum step size $\alpha_{\min} > 0$,
    tolerance $\mathtt{tol} > 0$.
    \Ensure%
    \Ac{rom} $(\hE, \hA, \hB, \hC)$.
    \State%
    Set $\hH_1$ as the transfer function of the initial \ac{rom}.
    \For{$k$ in $1, 2, \ldots, \mathtt{maxit}$}
    \State%
    $\alpha_k = 1$
    \While{\ac{rom} in~\eqref{eq:step-proj} is unstable,
      has a different Cauchy index, or
      it increases the $\Htwo$ error}
    \State%
    $\alpha_k = \alpha_k / 2$
    \If{$\alpha_k < \alpha_{\min}$}%
    \State%
    Exit the \textbf{for} loop.
    \EndIf%
    \EndWhile%
    \State%
    Set $(\hE_{k + 1}, \hA_{k + 1}, \hB_{k + 1}, \hC_{k + 1})$ as
    in~\eqref{eq:step-proj} with transfer function $\hH_{k + 1}$.
    \If{$\normHtwo{\hH_k - \hH_{k + 1}}
        \le \mathtt{tol} \cdot \alpha_k \normHtwo{\hH_{k + 1}}$}%
    \State%
    Exit the \textbf{for} loop.
    \EndIf%
    \EndFor%
    \State%
    Return the last computed \ac{rom}.
  \end{algorithmic}
\end{algorithm}

A couple of remarks are in order.
In checking whether $\Htwo$ error has decreased, we
do \emph{not} explicitly compute $\normHtwo{H - \hH_k}$
as this will require solving a large-scale Lyapunov equation at every step.
Instead note that
\begin{equation*}
  \normHtwo*{H - \hH_k}^2
  = \normHtwo{H}^2
  + 2 \Real*{\ipHtwo*{H}{\hH_k}}
  + \normHtwo*{\hH_k}^2.
\end{equation*}
The first term in this formula is a constant.
Thus, all we need to check is the last two terms
which can be easily and effectively computed.
We chose a stopping criterion based on a relative change in the \ac{rom}.
Other stopping criteria are also possible.

\begin{rem}
  We note that computations in~\eqref{eq:step-proj}
  involve solving linear systems with reduced Gramians $\hP_k$ and $\hQ_k$
  as system matrices.
  We observed that in some cases
  these reduced Gramians can be ill-conditioned,
  which then leads to choosing small step sizes,
  thus requiring small $\alpha_{\min}$.
  In this paper, our main goal is to establish the main theoretical framework.
  Developing better ways of handling such numerical issues, e.g.
  by regularizing the Gramians via truncated SVD,
  would be a topic of future research.
  We have also observed that in some cases $\hE_k$
  can become  ill-conditioned.
  As a way to circumvent this, we check the condition number of $\hE_k$ and
  if it becomes large (e.g., larger than $10^4$,
  which we use in the numerical examples),
  we apply a state-space transform to make $\hE_k$ the identity matrix.
\end{rem}

\subsection{Interpretation as a Fixed Point Iteration}

Note that Theorem~\ref{thm:step-orth},
together with Theorem~\ref{thm:orth-interp},
gives that $\hH_{k + 1}$ is a bitangential Hermite interpolant of
$H_{k, \alpha_k}$ at the reflected poles and in the directions of
$\hH_k(s) = \sum_{i = 1}^r \frac{\cik \bikstar}{s - \lamik}$, i.e.,
\begin{subequations}\label{eq:irka-alpha}
  \begin{align}
    H_{k, \alpha_k}\myparen*{-\lamikstar} \bik
     & =
    \hH_{k + 1}\myparen*{-\lamikstar} \bik,     \\
    \cikstar H_{k, \alpha_k}\myparen*{-\lamikstar}
     & =
    \cikstar \hH_{k + 1}\myparen*{-\lamikstar}, \\
    \cikstar H_{k, \alpha_k}''\myparen*{-\lamikstar} \bik
     & =
    \cikstar \hH_{k + 1}'\myparen*{-\lamikstar} \bik,
  \end{align}
\end{subequations}
for $i = 1, 2, \ldots, r$.
This can be interpreted as a fixed point iteration applied to
\begin{align*}
  \myparen*{\alpha H + (1 - \alpha) \hH}\myparen*{-\overline{\lambda_i}}
  b_i
   & =
  \hH\myparen*{-\overline{\lambda_i}}
  b_i,                                 \\
  c_i\herm
  \myparen*{\alpha H + (1 - \alpha) \hH}\myparen*{-\overline{\lambda_i}}
   & =
  c_i\herm
  \hH\myparen*{-\overline{\lambda_i}}, \\
  c_i\herm
  \myparen*{\alpha H + (1 - \alpha) \hH}'\myparen*{-\overline{\lambda_i}}
  b_i
   & =
  c_i\herm
  \hH'\myparen*{-\overline{\lambda_i}}
  b_i,
\end{align*}
which is clearly equivalent to~\eqref{eq:h2-opt-cond-interp}
for any $\alpha > 0$.
Therefore, the proposed Riemannian gradient descent formulation with line search
for $\Htwo$ minimization can be considered as a fixed point iteration applied to
modified interpolatory necessary optimality conditions;
thus more clearly illustrating the distinction from the original \ac{irka}
formulation, which corresponds to $\alpha = 1$.

\section{Numerical Examples}%
\label{sec:numerics}

The code producing the presented results is available at~\cite{Mli23},
written in the Python programming language and based on pyMOR~\cite{MilRS16}.
In all examples, we set $\mathtt{maxit} = 100$, $\alpha_{\min} = 10^{-20}$, and
$\mathtt{tol} = 10^{-4}$ in \Cref{alg:irka2}.

\subsection{Example 1}

We start with the following example from~\cite[Section~5.4]{GugAB08}:
\begin{equation*}
  H(s) = \frac{-s^2 + (7/4) s + 5/4}{s^3 + 2 s^2 + (17/16) s + 15/32}.
\end{equation*}
Notably,~\cite{GugAB08} shows that for the reduced order $r = 1$,
the globally $\Htwo$-optimal \ac{rom} for $H$,
with the pole at $0.27272$,
is repellent for \ac{irka},
thus \ac{irka} does not converge.
We apply \ac{irka} and the proposed method, i.e. \acs{irka2}, as implemented in
\Cref{alg:irka2}, to this model.
\Cref{fig:gab3_r1_c1} shows the results when both method are initialized with
$\hE_1 = 1$, $\hA_1 = -0.27$, $\hB_1 = 1$, $\hC_1 = 1$,
which has a Cauchy index of $1$ and
it is close to the global minimum
(as used in~\cite{GugAB08}).
Indeed, we observe that \ac{irka} iterates move away from the optimum as
expected since the minimizer is a repellent fixed point.
And note that many intermediate \ac{irka} iterates are unstable.
Furthermore, the Cauchy index changes many times during the \ac{irka}
iterations.
On the other hand, \acs{irka2} converges quickly
(with stability guarantee at every step) and
it often uses $\alpha_k < 1$ as the step size.
Moreover, note that the Cauchy index stays constant during \acs{irka2}.

\begin{figure}[tb]
  \centering
  \includegraphics[width=24em]{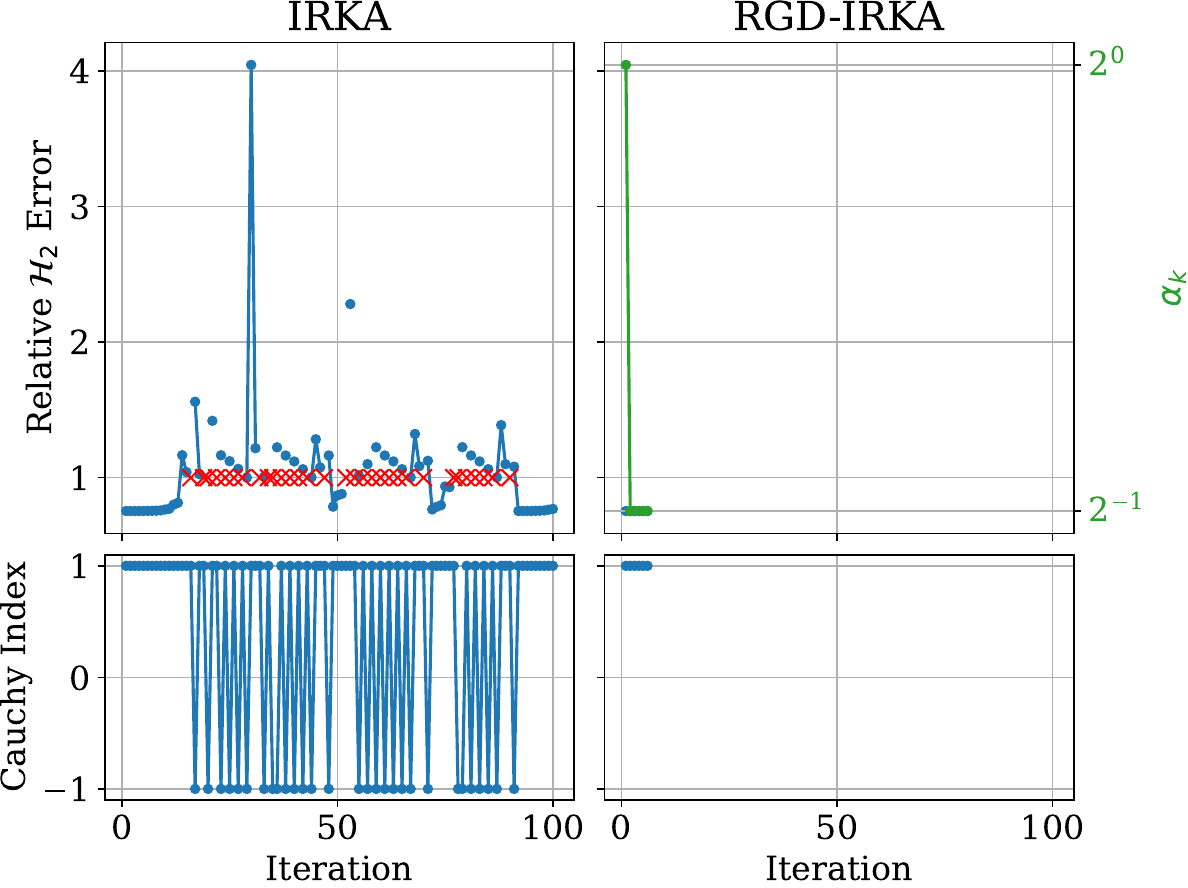}
  \caption{IRKA and \acs{irka2} results for the~\cite{GugAB08} example and
    $r = 1$.
    Red crosses represent unstable models}%
  \label{fig:gab3_r1_c1}
\end{figure}

Next we compare \ac{irka} and \acs{irka2} on the same example for $r = 2$
in~\Cref{fig:gab3_r2_c0},
initialized with
\begin{equation*}
  \hE_1 = I_2,\
  \hA_1 =
  \begin{bmatrix}
    -1 & 1  \\
    -1 & -1
  \end{bmatrix}\!,\
  \hB_1 =
  \begin{bmatrix}
    1 \\
    1
  \end{bmatrix}\!,\
  \hC_1 =
  \begin{bmatrix}
    1 & 1
  \end{bmatrix}\!.
\end{equation*}
Note that its transfer function has Cauchy index of $0$.
In this case with this specific initialization,
\ac{irka} and \acs{irka2} behave exactly the same.
So, for these specific choices,
the original \ac{irka} steps indeed correspond to descent steps.

\begin{figure}[tb]
  \centering
  \includegraphics[width=24em]{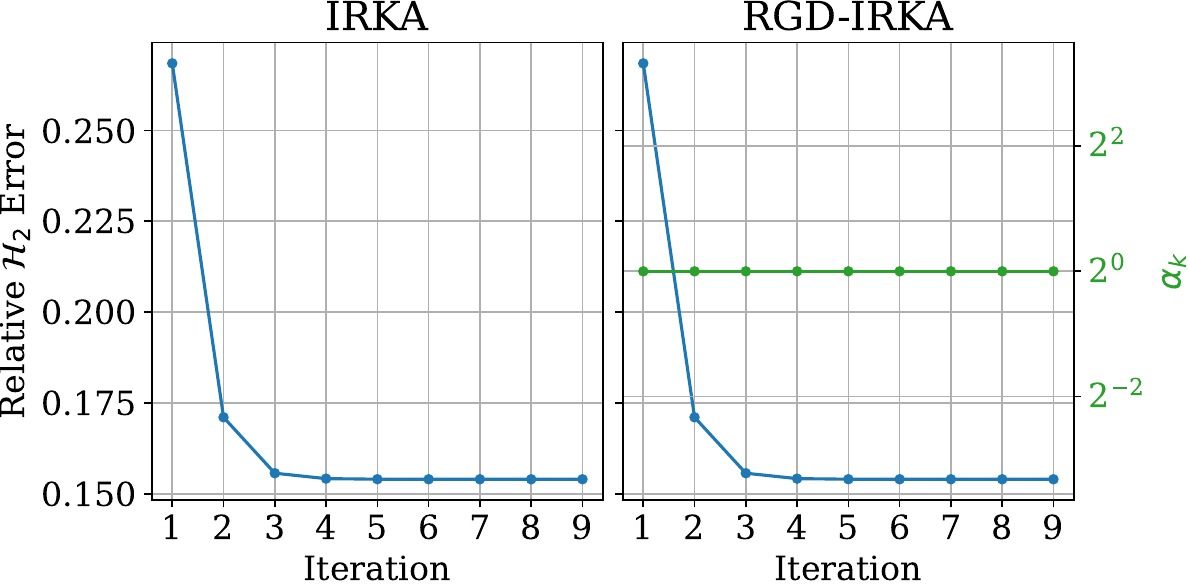}
  \caption{IRKA and \acs{irka2} results for the~\cite{GugAB08} example, $r = 2$,
    and initial Cauchy index of $0$}%
  \label{fig:gab3_r2_c0}
\end{figure}

Next for $r = 2$, we change the initialization to
\begin{equation*}
  \hE_1 = I_2,\
  \hA_1 =
  \begin{bmatrix}
    -1 & 0  \\
    0  & -2
  \end{bmatrix}\!,\
  \hB_1 =
  \begin{bmatrix}
    1 \\
    1
  \end{bmatrix}\!,\
  \hC_1 =
  \begin{bmatrix}
    1 & 1
  \end{bmatrix}\!.
\end{equation*}
which has Cauchy index of $2$.
The results, depicted in \Cref{fig:gab3_r2_c2},
show that, in the \ac{siso} case,
where the manifold $\ReManRatStab$ is disconnected and
\ac{irka2} enforces the iterates to stay on the same connected component,
a bad choice of the initial Cauchy index
can lead to a bad \ac{rom} for \acs{irka2},
but \ac{irka} still converges to a good local minimum as it jumps to a model
with a different Cauchy index.
Furthermore, we note that the pole-residue form of the \ac{rom} from \ac{irka2}
is
\begin{equation*}
  \frac{0.97188}{s + 0.27344}
  + \frac{8.4933}{s + 6.9933 \times 10^{12}}.
\end{equation*}
It suggests that,
since \acs{irka2} is restricted to one connected component due to line search,
the iterates converge to the boundary of the connected component,
which are systems of lower order.
We emphasize that Cauchy index initialization is only a potential concern in the
\ac{siso} case.

\begin{figure}[tb]
  \centering
  \includegraphics[width=24em]{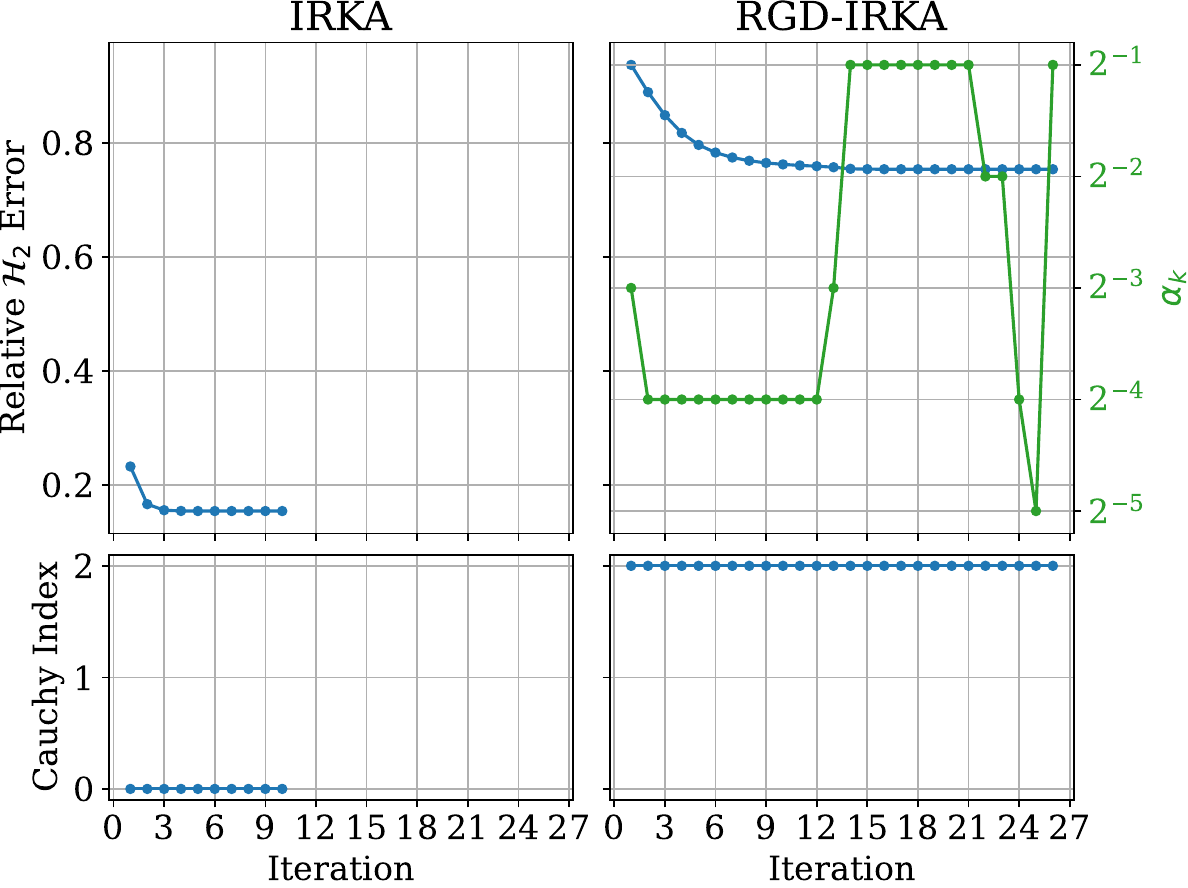}
  \caption{IRKA and \acs{irka2} results for the~\cite{GugAB08} example, $r = 2$,
    and initial Cauchy index of $2$}%
  \label{fig:gab3_r2_c2}
\end{figure}

\subsection{Example 2}

Here we show results on the CD player example from the NICONET benchmark
collection~\cite{ChaV02},
which is \iac{mimo} system with $2$ inputs and $2$ outputs.
\Cref{fig:cd_r6} shows the result for $r = 6$, initialized with
$\hE_1 = I_6$,
$\hA_1 = \mydiag{-1, -2, \dots, -6}$,
$\hB_1 = \ones_{6 \times 2}$,
$\hC_1 = \ones_{2 \times 6}$,
where $\ones_{m \times n} \in \bbR^{m \times n}$ is a matrix of ones.
Note that, since the manifold $\ReManRatStab$ is now connected,
Cauchy index is not necessary (and is not even defined).
Here we observe convergence of \ac{irka} with most iterates being unstable.
On the other hand, \acs{irka2} converges faster and
reaches a slightly better \ac{rom}
(relative $\Htwo$ errors are $1.9003 \times 10^{-3}$ for \ac{irka} and
$1.1167 \times 10^{-3}$ for \ac{irka2}).
The first step size in \acs{irka2} is $2^{-14} \approx 6 \times 10^{-5}$
while the later ones are $1$.

\begin{figure}[tb]
  \centering
  \includegraphics[width=24em]{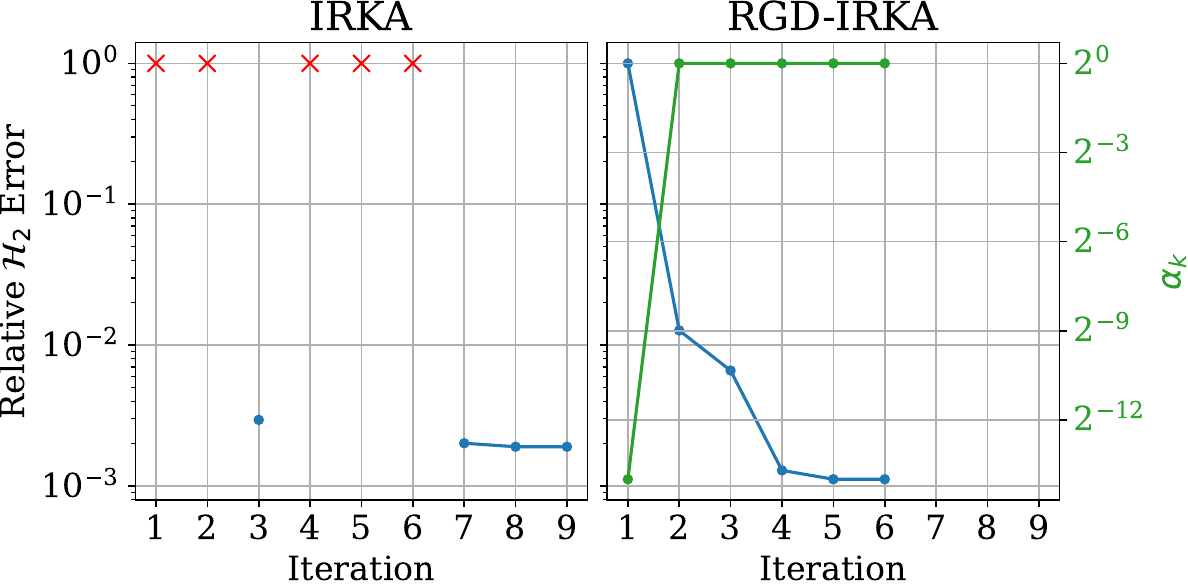}
  \caption{IRKA and \acs{irka2} results for the CD player example and $r = 6$.
    Red crosses represent unstable models}%
  \label{fig:cd_r6}
\end{figure}

\section{Conclusion}%
\label{sec:conclusion}

We showed that \ac{irka} can be interpreted as a Riemannian gradient descent
method with a fixed step size.
This interpretation directly leads to a development of a Riemannian gradient
descent formulation of \ac{irka}, called \ac{irka2},
that employs variable step size and line search,
with guaranteed convergence and stability preservation.
Numerical examples illustrated the benefits of~\ac{irka2}.



\bibliographystyle{alphaurl}
\bibliography{my}

\newcommand{\etalchar}[1]{$^{#1}$}
\begin{thebibliography}{BGTQ{\etalchar{+}}21b}

\bibitem[ABG94]{AlpBG94}
D.~Alpay, L.~Baratchart, and A.~Gombani.
\newblock On the differential structure of matrix-valued rational inner
  functions.
\newblock In A.~Feintuch and I.~Gohberg, editors, {\em Nonselfadjoint Operators
  and Related Topics}, pages 30--66, Basel, 1994. Birkh{\"a}user Basel.
\newblock \href {https://doi.org/10.1007/978-3-0348-8522-5_2}
  {\path{doi:10.1007/978-3-0348-8522-5_2}}.

\bibitem[ABG10]{AntBG10}
A.~C. Antoulas, C.~A. Beattie, and S.~Gugercin.
\newblock Interpolatory model reduction of large-scale dynamical systems.
\newblock In J.~Mohammadpour and K.~M. Grigoriadis, editors, {\em Efficient
  Modeling and Control of Large-Scale Systems}, pages 3--58. Springer US,
  Boston, MA, 2010.
\newblock \href {https://doi.org/10.1007/978-1-4419-5757-3_1}
  {\path{doi:10.1007/978-1-4419-5757-3_1}}.

\bibitem[ABG20]{AntBG20}
A.~C. Antoulas, C.~A. Beattie, and S.~G{\"u}{\u{g}}ercin.
\newblock {\em Interpolatory methods for model reduction}.
\newblock Computational Science and Engineering 21. SIAM, Philadelphia, PA,
  2020.
\newblock \href {https://doi.org/10.1137/1.9781611976083}
  {\path{doi:10.1137/1.9781611976083}}.

\bibitem[AM12]{AbsM12}
P.-A. Absil and J.~Malick.
\newblock Projection-like retractions on matrix manifolds.
\newblock {\em {SIAM} J. Optim.}, 22(1):135--158, 2012.
\newblock \href {https://doi.org/10.1137/100802529}
  {\path{doi:10.1137/100802529}}.

\bibitem[AMS08]{AbsMS08}
P.-A. Absil, R.~Mahony, and R.~Sepulchre.
\newblock {\em Optimization Algorithms on Matrix Manifolds}.
\newblock Princeton University Press, Princeton, N.J. Woodstock, 2008.
\newblock URL: \url{https://press.princeton.edu/absil}.

\bibitem[Ant05]{Ant05}
A.~C. Antoulas.
\newblock {\em Approximation of Large-Scale Dynamical Systems}, volume~6 of
  {\em Adv. Des. Control}.
\newblock {SIAM}, Philadelphia, PA, 2005.
\newblock \href {https://doi.org/10.1137/1.9780898718713}
  {\path{doi:10.1137/1.9780898718713}}.

\bibitem[BCS19]{BenCS19}
P.~Benner, X.~Cao, and W.~Schilders.
\newblock A bilinear {$\mathcal{H}_2$} model order reduction approach to linear
  parameter-varying systems.
\newblock {\em Advances in Computational Mathematics}, 45(5-6):2241--2271,
  April 2019.
\newblock \href {https://doi.org/10.1007/s10444-019-09695-9}
  {\path{doi:10.1007/s10444-019-09695-9}}.

\bibitem[BD82]{ByrD82}
C.~I. Byrnes and T.~E. Duncan.
\newblock On certain topological invariants arising in system theory.
\newblock In {\em New Directions in Applied Mathematics}, pages 29--71.
  Springer New York, 1982.
\newblock \href {https://doi.org/10.1007/978-1-4612-5651-9_3}
  {\path{doi:10.1007/978-1-4612-5651-9_3}}.

\bibitem[BGTQ{\etalchar{+}}21a]{BenGQ+21a}
P.~Benner, S.~Grivet-Talocia, A.~Quarteroni, G.~Rozza, W.~H.~A. Schilders, and
  L.~M. Silveira, editors.
\newblock {\em Model Order Reduction. Volume 1: System- and Data-Driven Methods
  and Algorithms}.
\newblock De~Gruyter, Berlin, 2021.
\newblock \href {https://doi.org/10.1515/9783110498967}
  {\path{doi:10.1515/9783110498967}}.

\bibitem[BGTQ{\etalchar{+}}21b]{BenGQ+21b}
P.~Benner, S.~Grivet-Talocia, A.~Quarteroni, G.~Rozza, W.~H.~A. Schilders, and
  L.~M. Silveira, editors.
\newblock {\em Model Order Reduction. Volume 2: Snapshot-Based Methods and
  Algorithms}.
\newblock De~Gruyter, Berlin, 2021.
\newblock \href {https://doi.org/10.1515/9783110671490}
  {\path{doi:10.1515/9783110671490}}.

\bibitem[BKS11]{BenKS11}
P.~Benner, M.~K{\"{o}}hler, and J.~Saak.
\newblock Sparse-dense {S}ylvester equations in {${H}_2$}-model order
  reduction.
\newblock Preprint MPIMD/11-11, Max Planck Institute Magdeburg, December 2011.
\newblock URL: \url{https://csc.mpi-magdeburg.mpg.de/preprints/2011/11/}.

\bibitem[BOCW17]{BenOCW17}
P.~Benner, M.~Ohlberger, A.~Cohen, and K.~Willcox, editors.
\newblock {\em Model Reduction and Approximation: Theory and Algorithms}.
\newblock Computational Science \& Engineering. SIAM, Philadelphia, PA, 2017.
\newblock \href {https://doi.org/10.1137/1.9781611974829}
  {\path{doi:10.1137/1.9781611974829}}.

\bibitem[Bou23]{Bou23}
N.~Boumal.
\newblock {\em An introduction to optimization on smooth manifolds}.
\newblock Cambridge University Press, Cambridge, England, March 2023.

\bibitem[Bro76]{Bro76}
R.~Brockett.
\newblock Some geometric questions in the theory of linear systems.
\newblock {\em {IEEE} Trans. Autom. Control}, 21(4):449--455, 1976.
\newblock \href {https://doi.org/10.1109/TAC.1976.1101301}
  {\path{doi:10.1109/TAC.1976.1101301}}.

\bibitem[Cla66]{Cla76}
J.~M.~C. Clark.
\newblock The consistent selection of local coordinates in linear system
  identification.
\newblock In {\em Joint Automatic Control Conf.}, pages 576--580, 1966.

\bibitem[CVD02]{ChaV02}
Y.~Chahlaoui and P.~Van~Dooren.
\newblock A collection of benchmark examples for model reduction of linear time
  invariant dynamical systems.
\newblock Technical Report 2002-2, SLICOT Working Note, 2002.
\newblock Available from \url{www.slicot.org}.

\bibitem[Del85]{Del85}
D.~F. Delchamps.
\newblock Global structure of families of multivariable linear systems with an
  application to identification.
\newblock {\em Mathematical Systems Theory}, 18(1):329--380, December 1985.
\newblock \href {https://doi.org/10.1007/bf01699476}
  {\path{doi:10.1007/bf01699476}}.

\bibitem[GAB06]{GugAB06}
S.~Gugercin, A.~C. Antoulas, and C.~A. Beattie.
\newblock A rational {K}rylov iteration for optimal {$\mathcal{H}_2$} model
  reduction.
\newblock In {\em Proc. of the 17th International Symposium on Mathematical
  Theory of Networks and Systems}, pages 1665--1667, 2006.

\bibitem[GAB08]{GugAB08}
S.~Gugercin, A.~C. Antoulas, and C.~Beattie.
\newblock {$\mathcal{H}_2$} model reduction for large-scale linear dynamical
  systems.
\newblock {\em {SIAM} J. Matrix Anal. Appl.}, 30(2):609--638, 2008.
\newblock \href {https://doi.org/10.1137/060666123}
  {\path{doi:10.1137/060666123}}.

\bibitem[Haz77]{Haz77}
M.~Hazewinkel.
\newblock Moduli and canonical forms for linear dynamical systems {II}: The
  topological case.
\newblock {\em Mathematical Systems Theory}, 10(1):363--385, December 1977.
\newblock \href {https://doi.org/10.1007/bf01683285}
  {\path{doi:10.1007/bf01683285}}.

\bibitem[HK76]{HazK76}
M.~Hazewinkel and R.~E. Kalman.
\newblock On invariants, canonical forms and moduli for linear, constant,
  finite dimensional, dynamical systems.
\newblock In G.~Marchesini and S.~K. Mitter, editors, {\em Mathematical Systems
  Theory}, pages 48--60, Berlin, Heidelberg, 1976. Springer Berlin Heidelberg.
\newblock \href {https://doi.org/10.1007/978-3-642-48895-5_4}
  {\path{doi:10.1007/978-3-642-48895-5_4}}.

\bibitem[Kli11]{Kli11}
W.~P.~A. Klingenberg.
\newblock {\em Riemannian Geometry}.
\newblock De Gruyter, 2011.
\newblock \href {https://doi.org/10.1515/9783110905120}
  {\path{doi:10.1515/9783110905120}}.

\bibitem[Lan95]{Lan95}
S.~Lang, editor.
\newblock {\em Differential and Riemannian Manifolds}.
\newblock Springer New York, 1995.
\newblock \href {https://doi.org/10.1007/978-1-4612-4182-9}
  {\path{doi:10.1007/978-1-4612-4182-9}}.

\bibitem[Lee12]{Lee12}
J.~M. Lee.
\newblock {\em Introduction to Smooth Manifolds}.
\newblock Springer New York, 2012.
\newblock \href {https://doi.org/10.1007/978-1-4419-9982-5}
  {\path{doi:10.1007/978-1-4419-9982-5}}.

\bibitem[ML66]{MeiL66}
L.~Meier and D.~G. Luenberger.
\newblock Approximation of linear constant systems.
\newblock In {\em Joint Automatic Control Conf.}, pages 585--588, 1966.

\bibitem[ML67]{MeiL67}
L.~Meier and D.~G. Luenberger.
\newblock Approximation of linear constant systems.
\newblock {\em {IEEE} Trans. Autom. Control}, 12(5):585--588, 1967.
\newblock \href {https://doi.org/10.1109/TAC.1967.1098680}
  {\path{doi:10.1109/TAC.1967.1098680}}.

\bibitem[Mli23]{Mli23}
P.~Mlinari{\'c}.
\newblock Code for {R}iemannian gradient descent {IRKA} numerical experiments,
  November 2023.
\newblock Available from
  \url{https://github.com/pmli/rgd-irka-experiments/tree/v1}.

\bibitem[MRS16]{MilRS16}
R.~Milk, S.~Rave, and F.~Schindler.
\newblock py{MOR} -- generic algorithms and interfaces for model order
  reduction.
\newblock {\em {SIAM} J. Sci. Comput.}, 38(5):S194--S216, 2016.
\newblock \href {https://doi.org/10.1137/15M1026614}
  {\path{doi:10.1137/15M1026614}}.

\bibitem[NAP08]{NisAP08}
Y.~Nishimori, S.~Akaho, and M.~D. Plumbley.
\newblock Natural conjugate gradient on complex flag manifolds for complex
  independent subspace analysis.
\newblock In {\em Artificial Neural Networks - {ICANN} 2008}, pages 165--174.
  Springer Berlin Heidelberg, 2008.
\newblock \href {https://doi.org/10.1007/978-3-540-87536-9_18}
  {\path{doi:10.1007/978-3-540-87536-9_18}}.

\bibitem[SS15]{SatS15}
H.~Sato and K.~Sato.
\newblock Riemannian trust-region methods for {$H^2$} optimal model reduction.
\newblock In {\em 54th IEEE Conference on Decision and Control (CDC)}, pages
  4648--4655, 2015.
\newblock \href {https://doi.org/10.1109/CDC.2015.7402944}
  {\path{doi:10.1109/CDC.2015.7402944}}.

\bibitem[VGA10]{VanGA10}
P.~{Van Dooren}, K.~A. Gallivan, and P.-A. Absil.
\newblock {$\mathcal{H}_2$}-optimal model reduction with higher-order poles.
\newblock {\em {SIAM} J. Matrix Anal. Appl.}, 31(5):2738--2753, 2010.
\newblock \href {https://doi.org/10.1137/080731591}
  {\path{doi:10.1137/080731591}}.

\bibitem[XJY15]{XuJY15}
K.~Xu, Y.~Jiang, and Z.~Yang.
\newblock {$H_2$} order-reduction for bilinear systems based on {G}rassmann
  manifold.
\newblock {\em Journal of the Franklin Institute}, 352(10):4467--4479, 2015.
\newblock \href {https://doi.org/10.1016/j.jfranklin.2015.06.022}
  {\path{doi:10.1016/j.jfranklin.2015.06.022}}.

\bibitem[XZ13]{XuZ13}
Y.~Xu and T.~Zeng.
\newblock Fast optimal {$\mathcal{H}_2$} model reduction algorithms based on
  {G}rassmann manifold optimization.
\newblock {\em Int. J. Numer. Anal. Model.}, 10(4):972--991, 2013.
\newblock URL: \url{http://www.math.ualberta.ca/ijnam/Volume-10-2013/No-4-13/
  2013-04-13.pdf}.

\bibitem[YL99]{YanL99}
W.-Y. Yan and J.~Lam.
\newblock An approximate approach to {$\mathcal{H}_2$} optimal model reduction.
\newblock {\em {IEEE} Trans. Autom. Control}, 44(7):1341--1358, 1999.
\newblock \href {https://doi.org/10.1109/9.774107}
  {\path{doi:10.1109/9.774107}}.

\bibitem[ZDG96]{ZhoDG96}
K.~Zhou, J.~C. Doyle, and K.~Glover.
\newblock {\em Robust and Optimal Control}.
\newblock Prentice-Hall, Upper Saddle River, NJ, 1996.
\newblock \href {https://doi.org/10.1007/978-1-4471-6257-5}
  {\path{doi:10.1007/978-1-4471-6257-5}}.

\end{thebibliography}
\addcontentsline{toc}{chapter}{References}
\end{document}